\newtheorem{thm}{Theorem}[section]
\newtheorem{prop}[thm]{Proposition}
\newtheorem{lem}[thm]{Lemma}
\newtheorem{false statement}{False statement}
\theoremstyle{definition}
\newtheorem{defn}[thm]{Definition}
\newtheorem{claim}{Claim}
\newtheorem{conj}[thm]{Conjecture}
\makeatletter \@addtoreset{equation}{section}
\def\hf{\mathcal{F}}
\title{\bf\Large Counterexamples to Gerbner's Conjecture on Stability of Maximal $F$-free Graphs}
\author{
Jian Wang\thanks{Department of Mathematics,
Taiyuan University of Technology, Taiyuan 030024, P.~R.~China. E-mail: {\tt  wangjian01@tyut.edu.cn}. Research supported by NSFC No.11701407.}~~~~
Shipeng Wang\thanks{Department of Mathematics, Jiangsu University, Zhenjiang, Jiangsu 212013, P.~R. China. E-mail: {\tt  spwang22@yahoo.com}. Research supported by NSFC No.12001242.}~~~~
Weihua Yang\thanks{Department of Mathematics, Taiyuan University of Technology, Taiyuan 030024, P.~R. China. E-mail: {\tt  yangweihua@tyut.edu.cn}. Research supported by NSFC No.11671296.}~~~~
}
\begin{document}

\date{}

\maketitle

\begin{abstract}
Let $F$ be an $(r+1)$-color critical graph with $r\geq 2$, that is, $\chi(F)=r+1$ and
there is an edge $e$ in $F$ such that $\chi(F-e)=r$. Gerbner recently conjectured that every  $n$-vertex maximal $F$-free graph  with at least $(1-\frac{1}{r})\frac{n^2}{2}- o(n^{\frac{r+1}{r}})$ edges contains an induced complete $r$-partite graph on $n-o(n)$ vertices. Let $F_{s,k}$ be a graph obtained from $s$ copies of $C_{2k+1}$ by sharing a common edge. In this paper, we show that for all $k\geq 2$ if $G$ is an $n$-vertex maximal $F_{s,k}$-free graph with at least $n^{2}/4 - o(n^{\frac{s+2}{s+1}})$ edges, then $G$ contains an induced complete bipartite graph on $n-o(n)$ vertices. We also show that it is best possible. This disproves Gerbner's conjecture for $r=2$.
\end{abstract}

\medskip

\section{Introduction}

A graph is called {\it $F$-free} if it does not contain $F$ as a subgraph. The {\it extremal number} $ex(n,F)$ is defined as the  maximum number of edges in an $F$-free $n$-vertex graph. Let $T_r(n)$ be the complete $r$-partite graph on $n$ vertices with partition classes of size $\lfloor \frac{n}{r}\rfloor$ or $\lceil \frac{n}{r}\rceil$ and let $t_r(n)$ be the number of edges in $T_r(n)$. The classical Tur\'{a}n Theorem~\cite{turan 1941} shows that $ex(n,K_{r+1})=t_r(n)$ and $T_r(n)$ is the unique graph attaining it. Since then the problem of determining $ex(n,F)$ becomes a central topic in extremal graph theory, which has been extensively studied.

In the past decades, many stability extensions to extremal problems were also well-studied. The stability phenomenon is that if an $F$-free graph is ``close" to extremal in the number of edges, then it must be ``close" to the extremal graph in its structure. The
famous stability theorem of Erd\H{o}s and Simonovits \cite{erdos1966,sim1968} implies the following: if $G$ is a $K_{r+1}$-free graph with $t_r(n)-o(n^2)$ edges, then $G$  can be made into a copy of $T_r(n)$ by adding or deleting $o(n^2)$ edges.

A graph $G$ is called {\it maximal $F$-free} if it is $F$-free and
the addition of any edge in the complement $\overline{G}$ creates a copy of $F$. Tyomkyn and Uzzel \cite{tyomkyn2015} considered a different kind of stability problems: when can one guarantee an
`almost spanning' complete $r$-partite subgraph in a maximal $K_{r+1}$-free graph $G$ with $t_r(n)-o(n^2)$ edges?
They showed that every maximal $K_{4}$-free graph $G$ with $t_3(n)-cn$ edges contains a complete $3$-partite subgraph on $(1-o(1))n$ vertices. Popielarz, Sahasrabudhe and Snyder \cite{popie2018} completely answered this question.

\begin{thm}[\cite{popie2018}]\label{PSSthm}
Let $r\geq 2$ be an integer. Every maximal $K_{r+1}$-free on $n$ vertices with at least $t_r(n)-o(n^{\frac{r+1}{r}})$ edges contains an induced complete $r$-partite subgraph on $(1-o(1))n$ vertices.
\end{thm}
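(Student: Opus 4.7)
The plan is to combine the Erd\H{o}s--Simonovits stability theorem with a structural analysis powered by the maximality of $G$; the sharp exponent $(r+1)/r$ enters through a K\H{o}v\'ari--S\'os--Tur\'an-type count of the witnesses produced by maximality. First I would invoke stability: since $|E(G)| \geq t_r(n) - o(n^{(r+1)/r}) \geq t_r(n) - o(n^2)$, there is a partition $V(G) = V_1 \cup \cdots \cup V_r$ with $|V_i| = n/r + o(n)$, chosen to minimise the total number of \emph{defect pairs} (edges inside a single class, together with non-edges between two different classes). With a slowly growing threshold $\tau = \tau(n)$, call $v$ \emph{bad} if it is incident to more than $\tau$ defect pairs; averaging yields $|B| = o(n)$ for the set $B$ of bad vertices. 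Set $U = V(G) \setminus B$ and $U_i = V_i \cap U$. The goal is then to prove that $G[U]$ is complete $r$-partite with parts $U_1,\dots,U_r$.

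The next step is to rule out edges internal to $U$. If $uv \in E(G)$ with $u, v \in U_i$, then typicality gives $|N(u) \cap N(v) \cap U_j| \geq (1 - o(1))|U_j|$ for every $j \neq i$. A greedy procedure selects, for each $j \neq i$, a vertex $w_j \in U_j$ in this common neighbourhood, with the $w_j$'s pairwise adjacent: across any two classes $U_j, U_k$ only negligibly many typical pairs are non-adjacent, so the greedy choice succeeds. The resulting $\{u, v\} \cup \{w_j : j \neq i\}$ is a $K_{r+1}$ in $G$, contradicting $K_{r+1}$-freeness.

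It remains to exclude cross non-edges in $G[U]$. For such a non-edge $uv$ with $u \in U_i$, $v \in U_j$, the maximality of $G$ supplies an $(r-1)$-clique $W_{uv} \subseteq N(u) \cap N(v)$ whose union with $\{u,v\}$ is a $K_{r+1}$. If $W_{uv}$ were contained in $U$, then by the previous paragraph each class $U_k$ would contain at most one element of $W_{uv}$, and the classes $U_i, U_j$ must be avoided altogether (an element in $U_i$ would form an internal edge with $u$, and similarly for $U_j$); so $W_{uv}$ would have to live in the remaining $r - 2$ classes, a pigeonhole contradiction since $|W_{uv}| = r - 1$. Hence every $W_{uv}$ meets $B$. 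Let $D$ denote the number of cross non-edges in $G[U]$; the edge hypothesis forces $D \leq o(n^{(r+1)/r})$. Double-counting incidences $(uv, w)$ with $w \in W_{uv} \cap B$, and using that $G[N(w)]$ is $K_r$-free (so its structure is tightly controlled by Tur\'an's theorem), yields a K\H{o}v\'ari--S\'os--Tur\'an-type relation between $D$, $|B|$, and per-vertex witness capacities; matched against the upper bound this forces $D = 0$.

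The main obstacle is the calibration in this final step: the threshold $\tau(n)$ must be chosen so that $B$ is small enough for the counting to collapse to $D = 0$, yet large enough that every witnessing clique is truly trapped in $B$, while simultaneously keeping the greedy selection of the internal-edge step viable. This balancing is exactly where the $o(n^{(r+1)/r})$ saving is consumed; the weaker $o(n^2)$ bound furnished by bare stability would not suffice to close the gap.
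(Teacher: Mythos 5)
This statement is quoted by the paper from Popielarz--Sahasrabudhe--Snyder \cite{popie2018}; the paper itself gives no proof of it, so the only in-house comparison point is the analogous argument for $F_{s,k}$ in the proof of Lemma \ref{thm-main}. Your skeleton up to the last step matches that template: stability plus a minimum-defect partition, removal of the $o(n)$ atypical vertices, a common-neighbourhood argument killing internal edges, and the observation that every maximality witness $W_{uv}$ of a surviving cross non-edge must meet the atypical set. All of that is sound.

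The genuine gap is the final claim that the counting ``forces $D=0$.'' That is false, and no calibration of $\tau(n)$ can rescue it. Take the $r=2$ lower-bound construction (Section 2 of this paper with $s=1$, or the construction in \cite{popie2018}) but with only $m=o(\sqrt n)$ connector vertices $z_1,\dots,z_m$, each joined to $X_i\cup Y_i$ with $|X_i|=|Y_i|=\sqrt n$ and $G[X_i,Y_i]$ empty. The number of missing cross pairs is $mn=o(n^{3/2})$, so the edge hypothesis is satisfied, yet the $z_i$ are exactly the atypical vertices, and after deleting them the $mn$ non-edges between the pairs $(X_i,Y_i)$ are all still present: $D>0$. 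This is precisely why the theorem asserts only the existence of an induced complete $r$-partite subgraph on $(1-o(1))n$ vertices rather than that $G-B$ is itself complete $r$-partite. The correct final step is a further deletion process, not a contradiction: for each surviving non-edge $uv$ one deletes the smaller of the two neighbourhood sets $N(w)\cap U_i$, $N(w')\cap U_j$ attached to the witnesses $w,w'\in B$, noting that each deleted set $S$ accounts for at least $|S|^2/C$ non-edges (since almost all pairs between the two neighbourhood sets are non-edges, as any edge there would close a $K_{r+1}$), and that the number of rounds is bounded in terms of $|B|$. Cauchy--Schwarz then bounds the total number of deleted vertices by roughly $\sqrt{C\,|B|\,D}=o(n)$, using $D=o(n^{(r+1)/r})$ and $|B|=o(n^{(r-1)/r})$. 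This is exactly the mechanism of Steps 1--3.2 in the proof of Lemma \ref{thm-main}, and it is the step where the exponent $\tfrac{r+1}{r}$ is actually consumed; without it your argument does not close.
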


Let $f(F,n,m)$ be the maximum integer $t$ such that every maximal $F$-free graph with at least $ex(n,F)-t$ edges contains an induced complete $(\chi(F)-1)$-partite subgraph on $n-m$ vertices. Popielarz, Sahasrabudhe and Snyder \cite{popie2018} give constructions to show that $f(K_{r+1},n,o(n))=o(n^{\frac{r+1}{r}})$. In \cite{WWYY}, Theorem \ref{PSSthm} was extended to maximal $C_{2k+1}$-free graphs.

\begin{thm}[\cite{WWYY}]\label{WWYYthm}
For every $k\geq 1$, $f(C_{2k+1},n,o(n))=o(n^{\frac{3}{2}})$.
\end{thm}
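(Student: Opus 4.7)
The plan is to follow the Popielarz--Sahasrabudhe--Snyder strategy used to establish Theorem~\ref{PSSthm}, replacing ``common clique'' arguments with ``long bipartite path'' arguments tailored to the forbidden cycle $C_{2k+1}$; both the stability direction and the matching sharpness construction must be handled. Let $G$ be a maximal $C_{2k+1}$-free graph on $n$ vertices with $\lfloor n^2/4\rfloor - o(n^{3/2})$ edges. Since $\ex(n, C_{2k+1}) = \lfloor n^2/4\rfloor$ for all sufficiently large $n$, a sharp Erd\H{o}s--Simonovits-type stability theorem for $C_{2k+1}$ delivers a bipartition $V(G) = A \cup B$ with $|A|, |B| = n/2 + o(n)$, $e(A) + e(B) = o(n^{3/2})$, and $|A||B| - e(A, B) = o(n^{3/2})$. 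Setting a threshold $\tau = \varepsilon n^{1/2}$, I let $W$ consist of vertices with more than $\tau$ neighbors in their own part or more than $\tau$ non-neighbors in the opposite part; the four deficiency budgets give $|W|\tau \leq o(n^{3/2})$, so $|W| = o(n)$.

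The main claim is that $G[V(G)\setminus W]$ is the complete bipartite graph with parts $A\setminus W$ and $B\setminus W$, and this splits into two sub-claims. First, there is no internal edge $xy$ with both endpoints in $A \setminus W$ (symmetrically for $B \setminus W$): the clean bipartite graph on $(A \setminus W) \cup (B \setminus W)$ is almost complete with both sides of size $n/2 - o(n)$, so one easily produces a bipartite $x$--$y$ path of length $2k$ by iteratively choosing common neighbors, and together with $xy$ this yields a forbidden $C_{2k+1}$ in $G$. Second, every cross pair $u \in A \setminus W$, $v \in B \setminus W$ is an edge of $G$: if $uv \notin E(G)$, maximality supplies a $u$--$v$ path $P$ of length $2k$ in $G$, and the parity of $P$ (even length between opposite sides) forces $P$ to contain an odd, hence at least one, internal edge. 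The first sub-claim tells us that every internal edge of $G$ must touch $W$, so $P$ itself passes through $W$; the plan is then to reroute $P$ through the huge clean bipartite neighborhoods $N(u)\cap (B\setminus W)$ and $N(v)\cap (A\setminus W)$, producing a genuine $C_{2k+1}$ in $G$ after careful vertex-disjointness bookkeeping, a contradiction.

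The decisive step, and what I expect to be the main obstacle, is this last rerouting: one must show that a bounded number of substitutions through common neighbors suffices to convert $P$ into a $C_{2k+1}$ in $G$, handling each of the $2k$ possible locations of an internal edge along $P$ and ruling out accidental coincidences with the remaining vertices of the cycle, all while keeping the quantitative counts inside the $o(n^{3/2})$ budget. For the matching sharpness---that $f(C_{2k+1}, n, o(n))$ cannot be forced to grow faster than $n^{3/2}$---I would adapt the construction of~\cite{popie2018}: start from $K_{\lfloor n/2\rfloor, \lceil n/2\rceil}$, delete a $\Theta(n^{3/2})$-edge bipartite subgraph of sufficiently high girth, and plant a small blocking gadget on $\Theta(n^{1/2})$ vertices inside one side so that the resulting graph is maximal $C_{2k+1}$-free, loses $\Theta(n^{3/2})$ edges overall, yet every induced complete bipartite subgraph of it omits $\Omega(n)$ vertices.
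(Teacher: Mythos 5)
Your first sub-claim (no internal edge survives outside $W$) is sound and matches the standard argument: two adjacent vertices in the same part have $n/2-o(n)$ common neighbours opposite, and a path of length $2k-1$ among the clean vertices closes into a $C_{2k+1}$. The fatal problem is your second sub-claim, that \emph{every} cross pair $u\in A\setminus W$, $v\in B\setminus W$ is an edge. This is false, and the rerouting step you flag as ``the main obstacle'' cannot be carried out. Once you know $G[(A\cup B)\setminus W]$ has no internal edges, it is bipartite, so every cycle confined to it is even; any copy of $C_{2k+1}$ must pass through $W$ (or through an internal edge meeting $W$). Hence when you take the witnessing $u$--$v$ path $P$ of length $2k$ and try to reroute it through the clean common neighbourhoods, you either stay inside the clean bipartite part (and then you can only build even cycles, never a $C_{2k+1}$) or you must retain the portion of $P$ inside $W$ that supplies the parity flip, and then there is no reason the reroute closes into a cycle of length exactly $2k+1$ --- indeed it cannot, since $G$ is genuinely $C_{2k+1}$-free. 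The extremal constructions (Definition 2.2 of this paper with $s=1$, or the construction in \cite{popie2018,WWYY}) show that maximal graphs with deficiency $o(n^{3/2})$ really do retain up to $\Theta(\alpha n^{3/2})$ cross non-edges, each witnessed only by paths through a gadget set; no amount of rerouting makes these pairs adjacent.

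What maximality actually yields is weaker but sufficient: for each surviving non-edge $xy$, the witnessing path must meet the exceptional set $T$ adjacent to \emph{both} $x$ and $y$ (Claim 4 in Section 3 of this paper is the $F_{s,k}$ analogue). One then argues that any clean edge between $N(x^*)\cap U$ and $N(y^*)\cap V$ (for the $T$-neighbours $x^*,y^*$ on the path) could be substituted to create a forbidden cycle, so these two neighbourhoods span many non-edges; deleting the smaller of them and iterating gives disjoint sets $S_1,\dots,S_l$ with $\sum_i |S_i|^2/(16h^2)$ at most the total non-edge count $O(\varepsilon n^2)$ and $l=O(|T|)=O(\varepsilon n)$, whence Cauchy--Schwarz gives $\sum_i|S_i|=O(\varepsilon n^{3/2})$, which is $o(n)$ precisely when the deficiency is $o(n^{3/2})$. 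This deletion-plus-counting mechanism is where the exponent $3/2$ really comes from; your threshold $\tau=\varepsilon n^{1/2}$ does not substitute for it. Your sharpness sketch is directionally right (blocked bipartition plus planted path gadgets witnessing the missing pairs) but would need the block structure spelled out to verify maximality and the $\Omega(n)$ loss in any induced complete bipartite subgraph.
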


We say that a graph $F$ is {\it $(r+1)$-color-critical}, if $\chi(F)=r+1$ but
there is an edge $e$ in it such that $\chi(F-e)=r$. Recently, Gerbner proposed the following conjecture.

\begin{conj}[\cite{gerbner}]\label{GerbnerConj}
Let $r\geq 2$ be an integer and $F$ be an $(r+1)$-color critical graph. Then  $f(F,n,o(n))\geq o(n^{\frac{r+1}{r}})$.
\end{conj}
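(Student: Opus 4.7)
The plan is to attempt a direct generalization of the arguments of \cite{popie2018} (the case $F=K_{r+1}$, Theorem \ref{PSSthm}) and of \cite{WWYY} (the case $F = C_{2k+1}$, Theorem \ref{WWYYthm}) to an arbitrary $(r+1)$-color-critical graph $F$ with critical edge $e^*$. Let $G$ be a maximal $F$-free graph on $n$ vertices with $e(G) \geq \ex(n,F) - t(n)$, where $t(n) = o(n^{(r+1)/r})$. Since $F$ is color-critical, Simonovits's theorem gives $\ex(n,F) = t_r(n)$ for all sufficiently large $n$, so the Erd\H{o}s--Simonovits stability theorem supplies an $r$-partition $V_1 \cup \dots \cup V_r$ of $V(G)$ in which each part has size $n/r + o(n)$, the total number of intra-part edges is $o(n^2)$, and the total number of missing inter-part edges is $o(n^2)$. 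Call $v \in V_i$ \emph{typical} if $v$ has at most $\varepsilon n$ neighbors in $V_i$ and at least $(1-\varepsilon)|V_j|$ neighbors in each $V_j$ with $j \ne i$, and otherwise \emph{atypical}.

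The key (and technically heaviest) step is to upgrade this rough stability, using the sharper deficit $t(n) = o(n^{(r+1)/r})$, to conclude that the number of atypical vertices is $o(n^{1/r})$. The model is the supersaturation estimate in \cite{popie2018}: a vertex missing $d$ ``correct'' neighbors in its canonical neighborhood should force roughly $d^{r}/n^{r-1}$ rooted embeddings of $F - e^*$ in which, were the image of $e^*$ present, one would obtain a forbidden copy of $F$; summing over all atypical vertices and comparing with the global edge deficit $t(n)$ then yields the bound. In the clique case this supersaturation is essentially Kruskal--Katona applied to the link of each atypical vertex; for general color-critical $F$ one needs an analogous counting lemma for embeddings of $F - e^*$ into graphs close to $T_r(n)$.

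With this in hand, pass to $U := V(G) \setminus B$, where $B$ is the small set of atypical and low-degree vertices, and set $U_i := V_i \setminus B$. The maximality of $G$ should then force $G[U]$ to be the complete $r$-partite graph with parts $U_1, \dots, U_r$: for any non-edge $xy$ in $G[U]$, the graph $G + xy$ contains a copy of $F$ with $e^*$ mapped to $xy$, giving an embedding of $F - e^*$ rooted at $(x,y)$ inside $G$; the abundance of typical vertices around both $x$ and $y$ should allow this embedding to be rerouted through a present edge of $G$, contradicting $F$-freeness. Applied to both intra-part non-edges and missing cross-part edges this produces the claimed induced complete $r$-partite subgraph on $n - o(n)$ vertices.

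The main obstacle is clearly the middle step. For $F = K_{r+1}$ the supersaturation bound is Kruskal--Katona and is essentially tight, and for $F = C_{2k+1}$ it follows from a specialized odd-path-counting argument; for a general $(r+1)$-color-critical graph $F$ the exponent $(r+1)/r$ depends delicately on how rigidly the critical edge $e^*$ sits inside $F$ and on how many extensions of a partial embedding of $F - e^*$ a near-Tur\'an host admits. This is precisely the structural input on which the whole strategy hinges, and where one would expect the approach to be most fragile for graphs $F$ whose ``broken'' version $F - e^*$ has few distinct extensions into $T_r(n)$---exactly the pathology that the remainder of the paper will exploit.
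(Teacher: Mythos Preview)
The statement you are attempting to prove is Gerbner's Conjecture, which this paper \emph{disproves} for $r=2$: Theorem~\ref{thm-main3} gives $f(F_{s,k},n,o(n))=o(n^{(s+2)/(s+1)})$, strictly smaller than $o(n^{3/2})$ for every $s\geq 2$. So no proof can succeed, and the relevant question is where your outline actually breaks.

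It is not where you flag it. Your step~2 (bounding the atypical set $B$ by $o(n^{1/r})$) survives in the paper's construction: the gadget set $Z$ of Definition~\ref{defn-1} has size $(2k-1)st=O(n^{1/(s+1)})=o(n^{1/2})$, while every vertex of $X\cup Y$ is typical. The failure is the rerouting in step~3. After deleting $Z$, the bipartite graph $G[X\cup Y]$ still misses all edges between $X_i$ and $Y_i$ for each $i<t^s$, so a further $\alpha^s n$ vertices must be removed before one reaches a complete bipartite subgraph. For a non-edge $xy$ with $x\in X_i$, $y\in Y_i$, the copy of $F_{s,k}$ in $G+xy$ maps the critical edge to $xy$ and routes each of the $s$ paths through a distinct gadget $Z_{p,q_p}\subset B$. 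The endpoints $z^1_{p,q_p}$ and $z^{2k-1}_{p,q_p}$ of these gadgets have neighborhoods in $X\cup Y$ whose intersection over all $p$ is exactly $X_i$ and $Y_i$ respectively (this is precisely the $t$-ary addressing in Definition~\ref{defn-1}(iv) and the point of Proposition~\ref{prop-2-3}). Hence any candidate pair $(x',y')$ to which one might reroute the critical edge is forced back into $X_i\times Y_i$, which is empty by design. The ``abundance of typical vertices around $x$ and $y$'' is irrelevant, because the copy of $F-e^*$ is anchored through $B$, and with $s$ independent anchors the construction pins $(x,y)$ down to a prescribed non-edge block. The governing exponent is thus $(s+2)/(s+1)$, determined by the number of internally disjoint paths through the critical edge of $F$, and not by $r=\chi(F)-1$.
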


He verified Conjecture \ref{GerbnerConj} for some special 3-color-critical graphs.

\begin{thm}[\cite{gerbner}]\label{Gerbnerthm}
Let $F$ be a 3-color-critical graph in which
every edge has a vertex that is contained in a triangle. Then  $f(F,n,o(n))\geq o(n^{\frac{3}{2}})$.
\end{thm}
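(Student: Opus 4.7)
\bigskip
\noindent
\textbf{Proof Plan.} The plan is to combine the Erd\H{o}s--Simonovits stability theorem with a maximality-based local analysis, using the triangle hypothesis on $F$ to force an almost-bipartite structure on $G$.

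First, since $F$ is $3$-color-critical, Simonovits's theorem gives $ex(n,F)=t_2(n)$ for large $n$, and stability applied to $G$ (which has $t_2(n)-o(n^2)$ edges) yields a bipartition $V(G)=A\cup B$ with $|A|,|B|=(1\pm o(1))n/2$ and $e_G(A)+e_G(B)=o(n^2)$. Reassigning each vertex to the side in which it has more neighbors, we may assume $d_A(v)\leq d_B(v)$ for $v\in A$, and symmetrically for $B$. Writing $M=|A||B|-e(A,B)$ for the number of missing cross-edges, the hypothesis $e(G)\geq t_2(n)-o(n^{3/2})$ gives the deficit bound $M+e_G(A)+e_G(B)=o(n^{3/2})$.

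The next step uses maximality together with the triangle hypothesis. For each cross non-edge $uv$ (and, analogously, for each in-part edge), the graph $G+uv$ contains a copy of $F$ in which $uv$ plays the role of some edge $e'\in E(F)$. By assumption $e'$ has an endpoint incident to a triangle $T$ of $F$. A short case analysis -- splitting on whether $e'$ is the $3$-color-critical edge (in which case $F-e'$ is bipartite and $T$ must actually contain $e'$, since $F-e'$ admits no triangle) or not (in which case $T$ already sits in $F-e'$) -- shows that the image of $T$ is either a triangle in $G+uv$ through $uv$ or a triangle in $G$ through an endpoint of $uv$. Since no triangle of $G$ can lie entirely across $A\cup B$, this produces an in-part edge incident to $\{u,v\}$. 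A double count over pairs (cross non-edge, witnessing in-part edge) bounds $M$ linearly by $e_G(A)+e_G(B)$, which combined with the deficit bound gives $M=o(n^{3/2})$ and $e_G(A)+e_G(B)=o(n^{3/2})$.

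Finally, following the strategy of Popielarz, Sahasrabudhe and Snyder (Theorem \ref{PSSthm}) and the odd-cycle variant in \cite{WWYY} (Theorem \ref{WWYYthm}), one boosts the local argument to show that any vertex $v$ with positive defect has defect at least $\Omega(n^{1/2})$: an isolated same-part neighbour or missing cross neighbour of $v$ is combined with the $\Omega(n)$ choices for the third triangle vertex across the bipartition to produce many forbidden copies of $F-e'$ anchored at $v$, each of which must be killed by a further defect at $v$. The set $W$ of bad vertices then satisfies $|W|\leq O(n^{3/2})/\Omega(n^{1/2})=o(n)$, and by definition $G-W$ is an induced complete bipartite graph.

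The main obstacle is this last amplification step. Without it, the deficit bound only yields $|W|=o(n^{3/2})$, which is far too weak to conclude an almost-spanning bipartite induced subgraph. The triangle hypothesis is exactly what makes the amplification possible, because it ensures that each non-edge at a bad vertex forces a triangle through it, and varying the third vertex freely across the opposite side multiplies a single defect into $\Omega(n^{1/2})$ forbidden configurations. It is also precisely this step that fails for the counterexamples $F_{s,k}$ with $k\geq 2$ considered later, since $C_{2k+1}$ for $k\geq 2$ contains no triangle and the cross non-edges can be absorbed by copies of $F$ living essentially inside the bipartition.
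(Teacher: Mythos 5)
First, note that the paper does not actually prove Theorem \ref{Gerbnerthm}: it is quoted from Gerbner \cite{gerbner}, so the only in-paper benchmark for your sketch is the parallel machinery of Section 3 (Lemma \ref{lem-3-1} through the proof of Lemma \ref{thm-main}), which runs the same programme for $F_{s,k}$. Your architecture -- stability, then maximality plus the triangle hypothesis, then an amplification step -- is the right one, and you correctly identify the amplification as the crux. Still, two of your steps have real gaps. The smaller one: the ``deficit bound $M+e_G(A)+e_G(B)=o(n^{3/2})$'' does not follow from the edge count, which only yields $M\le e_G(A)+e_G(B)+o(n^{3/2})$ (plus an imbalance term); making the in-part edges small requires a separate $F$-freeness argument (an in-part edge together with the dense cross-bipartite graph produces $F$ because $F$ is $3$-color-critical; this is Claim 3 inside Lemma \ref{lem-3-3}). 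Your proposed double count over pairs (cross non-edge, witnessing in-part edge) cannot bound $M$ linearly by $e_G(A)+e_G(B)$ either: a single in-part edge $z_1z_2$ forming a triangle with $u$ witnesses the non-edge $uv$ for \emph{every} $v$, so one in-part edge can account for $\Omega(n)$ missing cross-pairs.

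The serious gap is the amplification itself. You claim every vertex with positive defect has defect $\Omega(n^{1/2})$ because the $\Omega(n)$ copies of $F-e'$ anchored at $v$ ``must be killed by a further defect at $v$'' -- but nothing forces the killing non-edge to sit at $v$; each such copy can instead be destroyed at the witness vertices, which live in the small exceptional set $T$, so no per-vertex lower bound follows. The established route (Popielarz--Sahasrabudhe--Snyder, \cite{WWYY}, and Steps 1--3.2 of the proof of Lemma \ref{thm-main}) is not a per-vertex defect bound but a greedy deletion of witness neighbourhoods $N(z)\cap U$, $N(z)\cap V$ for $z\in T$: each deleted set $S_i$ spans at least $|S_i|^2/(16h^2)$ missing cross-edges, the number of deletion rounds is $O(|T|)=O(\varepsilon n)$, and Cauchy--Schwarz gives $\sum_i|S_i|\le\bigl(O(\varepsilon n)\cdot O(h^4\varepsilon n^2)\bigr)^{1/2}=O(\varepsilon n^{3/2})=o(n)$ once $\varepsilon=o(n^{-1/2})$. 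The entire content of the triangle hypothesis is that it pins the witness of each missing cross-edge to a single vertex (or bounded pair) of $T$, keeping the number of rounds at $O(|T|)$ rather than $O(|T|^s)$; your sketch never engages with this, and it is precisely the dividing line between the exponent $\frac{3}{2}$ here and $\frac{s+2}{s+1}$ in Theorem \ref{thm-main3}.
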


Let $F_{s,k}$  be a graph obtained from $s$ copies of $C_{2k+1}$ by sharing a common edge. It is easy to see that $F_{s,k}$ is 3-color-critical. Note that each vertex of $F_{s,1}$ is contained in a triangle, and so $f(F_{s,1},n,o(n))\geq o(n^{\frac{3}{2}})$ by Theorem \ref{Gerbnerthm}. Actually, one can show that  $f(F_{s,1},n,o(n))= o(n^{\frac{3}{2}})$ by a similar construction in \cite{popie2018}. Since $F_{1,k}=C_{2k+1}$, $f(F_{1,k},n,o(n))$ has been determined in Theorem \ref{WWYYthm}.

In this paper, we extend the two results above and determine $f(F_{s,k},n,o(n))$ for all $k\geq 2$ and $s\geq 2$, and this disproves Conjecture \ref{GerbnerConj} for $r=2$.

\begin{thm}\label{thm-main3}
For $k\geq 2$ and $s\geq 2$,
\begin{equation*}
f(F_{s,k},n,o(n))=o(n^{\frac{s+2}{s+1}}).
\end{equation*}
\end{thm}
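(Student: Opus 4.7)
The theorem has two halves: stability (lower bound $f(F_{s,k},n,o(n))\ge o(n^{(s+2)/(s+1)})$) and a matching construction (upper bound). My plan focuses on the stability half, which is the main content.

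Let $G$ be maximal $F_{s,k}$-free on $n$ vertices with $e(G)\ge\lfloor n^2/4\rfloor-t(n)$, where $t(n)=o(n^{(s+2)/(s+1)})$. Since $F_{s,k}$ is $3$-colour-critical, $\ex(n,F_{s,k})=\lfloor n^2/4\rfloor$ for large $n$, so the Erd\H{o}s--Simonovits stability theorem, combined with a max-cut refinement that uses the tight edge budget, yields a bipartition $V(G)=A\cup B$ with $|A|,|B|=n/2\pm o(n)$ and with same-side edge count $e(A)+e(B)=O(t(n))$. Next I would remove the set $L$ of vertices with same-side degree exceeding some threshold $\tau$ with $\tau\to\infty$ and $\tau=o(n^{1/(s+1)})$; from $\sum_v d_{\text{same}}(v)=O(t(n))=o(n\tau)$ it follows that $|L|=o(n)$. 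Set $A'=A\setminus L$ and $B'=B\setminus L$.

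The crux is to bound the set $M$ of non-edges between $A'$ and $B'$. For each $(a,b)\in M$, maximality of $G$ coupled with $F_{s,k}$-freeness forces $s$ internally vertex-disjoint paths of length $2k$ from $a$ to $b$ in $G$, and each such even-length path between opposite sides of an almost-bipartite graph must traverse at least one same-side edge. I would then run a K\H{o}v\'ari--S\'os--Tur\'an style argument on the auxiliary bipartite incidence structure between $M$ and the set $E_b=E(A')\cup E(B')$ of same-side edges, with a critical twist: if $s+1$ non-edges in $M$ all route parts of their $s$-disjoint path systems through a common configuration of same-side edges, then splicing two such path systems at coinciding positions yields fresh length-$2k$ paths for an existing edge of $G$, ultimately embedding $F_{s,k}$ and contradicting $F_{s,k}$-freeness. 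This forbids a $K_{s+1,s+1}$-pattern in the auxiliary incidence graph; together with $|E_b|=O(t(n))=o(n^{(s+2)/(s+1)})$ it bounds $|M|=o(n^{(s+2)/(s+1)})$, in particular $|M|=o(n)$.

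Discarding one more set of $o(n)$ vertices incident to many elements of $M$ leaves an induced complete bipartite subgraph on $n-o(n)$ vertices, finishing the stability half. The hard part will be the splicing argument: naive path counting (such as $s|M|\le|E_b|\cdot O(n^{2k-2})$) is far too weak for $k\ge 2$, so one must exploit the internal vertex-disjointness of the $s$ paths per non-edge, identify the correct notion of ``shared configuration'' that triggers splicing, and verify that spliced paths really form a valid $F_{s,k}$ while avoiding $L$ (which is possible because every vertex of $A'\cup B'$ has cross-side degree $(1-o(1))n/2$, providing ample re-routing room). For the matching construction (upper bound on $f$), I would begin with $K_{\lfloor n/2\rfloor,\lceil n/2\rceil}$, plant $\Theta(n^{1/(s+1)})$ carefully designed path gadgets each spoiling $\Theta(n)$ bipartite edges while supplying strictly fewer than $s$ internally disjoint length-$2k$ paths across any critical edge, and verify simultaneously $F_{s,k}$-freeness, maximality, and the absence of any spanning induced complete bipartite subgraph on $n-o(n)$ vertices.
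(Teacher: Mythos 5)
Your overall architecture (stability to a near-bipartite structure, forcing each non-edge to route through the small defect set, a counting argument calibrated to give the exponent $\frac{s+2}{s+1}$, plus a layered gadget construction) matches the paper's in spirit, but two steps as written would fail. First, it is not true that maximality forces, for every non-edge $ab$, $s$ internally disjoint $a$--$b$ paths of length $2k$: the copy of $F_{s,k}$ created in $G+ab$ uses $ab$ as \emph{some} edge, and if $ab$ plays the role of an edge interior to one of the odd cycles (rather than the common edge), only a single path-plus-ambient-structure is forced. The paper must split the non-edges into classes $\Omega_1,\Omega_2,\Omega_3$ (and $\Omega_3$ further into four subclasses) according to whether $x,y$ map to the two degree-$(s{+}1)$ vertices of $F_{s,k}$, and each class needs its own deletion argument; only the class $\Omega_2$ resembles your picture, and even there the forced structure is a configuration of up to $s$ vertices of the exceptional set $T$ attached to $x$ (not $s$ paths determined by $a,b$ alone), which is what makes the step count $O(|T|^s)$ and hence the exponent appear.

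Second, your endgame does not produce a complete bipartite graph. Bounding the number of non-edges by $|M|=o(n^{\frac{s+2}{s+1}})$ and then deleting $o(n)$ vertices of high $M$-degree leaves behind all non-edges whose endpoints both have small $M$-degree; since $|M|$ can greatly exceed $n$, no $o(n)$-vertex deletion is implied by the edge count alone. What the paper actually does is a greedy process in which each step deletes an entire common neighbourhood $S_i$ of a tuple of $T$-vertices, chosen so that (a) $S_i$ is incident to at least $|S_i|^2/(16h^2)$ non-edges (because almost all pairs between the two relevant neighbourhoods must be non-edges, else one embeds $F_{s,k}$), and (b) each tuple can occur at most once, so the number of steps is $l=O(|T|^s)=O((\varepsilon n)^s)$. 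Cauchy--Schwarz then gives $\sum_i|S_i|\le\sqrt{O(\varepsilon n^2)\cdot l}=O(\varepsilon^{\frac{s+1}{2}}n^{\frac{s+2}{2}})$, which is $o(n)$ after setting $\varepsilon=\alpha n^{-s/(s+1)}$; this charging-plus-Cauchy--Schwarz mechanism is the actual source of the threshold and is missing from your plan. (Your construction sketch is also far from a proof: the paper's example needs a $t$-ary, $s$-digit indexing of $t^s$ pairs of classes $X_i,Y_i$, each of size $n^{1/(s+1)}$, so that only the full combination of $s$ gadgets completes a copy of $F_{s,k}$ and so that the missing pairs occupy a positive fraction of the vertices; maximality and $F_{s,k}$-freeness both require verification there.)
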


We prove Theorem \ref{thm-main3} by the following two lemmas.

\begin{lem}\label{thm-main2}
For $k,s\geq 2$, $0\leq \alpha \leq \frac{1}{2}$ and $n\geq \frac{8k^2s^2}{\alpha}$, there is a maximal $F_{s,k}$-free graph $G$ with $e(G) \geq \frac{n^2}{4}-2ks\alpha n^{\frac{s+2}{s+1}}$ such that any induced complete bipartite subgraph of $G$ has at most $(1-\alpha^s)n$ vertices.
\end{lem}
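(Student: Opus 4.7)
The plan is to exhibit an explicit graph $G$ on $n$ vertices meeting both numerical bounds, then verify in turn that $G$ is $F_{s,k}$-free, is maximal, has at least the claimed number of edges, and admits no induced complete bipartite subgraph larger than $(1-\alpha^s)n$. The construction is built on a bipartite backbone $K_{A,B}$ augmented by a small auxiliary set $U$ engineered to be forced out of any large induced complete bipartite subgraph.

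Set $t=\lceil \alpha n^{1/(s+1)}\rceil$ and $b=\lceil \alpha^{s-1} n^{s/(s+1)}\rceil$, so $tb\approx\alpha^s n$. Let $V(G)=A\sqcup B\sqcup U$ with $|A|=|B|=(n-tb)/2$ and $U=U_1\sqcup\cdots\sqcup U_t$ with $|U_i|=b$. Partition $A$ into $t$ equal blocks $A_1,\ldots,A_t$. Include all edges of $K_{A,B}$, and for each $u\in U_i$ add edges from $u$ to every vertex of $A\setminus A_i$ together with a carefully chosen small ``anchor'' subset of $B$, the latter designed so that each edge of $G$ lies in at most $s-1$ internally vertex-disjoint $C_{2k+1}$'s and every non-edge's addition creates an $F_{s,k}$.

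I would then verify the four properties. The bipartite bound follows because any $u\in U_i$ lying in an induced $K_{P,Q}$ forces $Q$ to avoid $A_i$ (or else $|P\cup Q|$ collapses to a much smaller configuration); optimizing over which $U$-vertices to include shows the best strategy is to exclude all of $U$, giving $|P\cup Q|\le n-|U|=(1-\alpha^s)n$. The edge count $|A||B|+\sum_{u\in U}\deg(u)$ simplifies to at least $n^2/4-2ks\alpha n^{(s+2)/(s+1)}$ once the parameters are substituted, the main loss being the missing-block factor $|A|/t$ in each $u$'s $A$-neighbourhood. For $F_{s,k}$-freeness and maximality, case analysis on the type of each edge (resp.\ non-edge) explicitly builds at most $s-1$ (resp.\ at least $s$) internally vertex-disjoint $C_{2k+1}$'s through it, combining abundant bipartite paths in $K_{A,B}$ with the $U$-anchor structure; when $e=ua$ with $a\in A_i$, the newly added edge supplies the ``$s$-th page'' of the book implicitly present at $U_i$.

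The principal obstacle is calibrating the $U$-to-$B$ anchor connections so that $F_{s,k}$-freeness and maximality hold simultaneously: the anchors must constrain cycles in $G$ tightly enough that no edge of $G$ lies in $s$ internally disjoint $C_{2k+1}$'s, yet the bipartite paths in $K_{A,B}$ must remain abundant enough that every non-edge completes to $s$ such cycles in $G+e$. Verifying both sides of this balance is the main technical work, and the hypothesis $n\ge 8k^2s^2/\alpha$ plays its role here by guaranteeing enough vertex-disjoint paths of length $2k$ in the backbone $K_{A,B}$ to assemble the required disjoint cycles for the maximality argument.
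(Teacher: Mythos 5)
There is a genuine gap: your construction cannot meet the edge bound, and the error is already visible in your own accounting. You place the $\alpha^s n$ vertices that are to be excluded from the large bipartite subgraph into an external set $U$ attached (almost) entirely to the $A$-side. Then $G$ is, up to the $O(s|U|)$ anchor edges, a bipartite graph with parts $A$ and $B\cup U$ of sizes $\tfrac{n-m}{2}$ and $\tfrac{n+m}{2}$ where $m=tb\approx\alpha^s n$, so $e(G)\leq \tfrac{n^2-m^2}{4}+O(sm)=\tfrac{n^2}{4}-\Theta(\alpha^{2s}n^2)$, which is far below $\tfrac{n^2}{4}-2ks\alpha n^{\frac{s+2}{s+1}}$ for fixed $\alpha$ and large $n$. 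Even the loss you yourself identify as the ``main'' one, namely $|U|\cdot|A|/t=b|A|\approx\tfrac{1}{2}\alpha^{s-1}n^{\frac{2s+1}{s+1}}$, already exceeds the budget, since $\tfrac{2s+1}{s+1}>\tfrac{s+2}{s+1}$ for all $s\geq 2$. No rebalancing of $|A|$ versus $|B|$ fixes this: any construction in which $\Theta(\alpha^s n)$ vertices each miss $\Omega(n^{\frac{s}{s+1}})$ potential neighbours loses $\Omega(n^{\frac{2s+1}{s+1}})$ edges. Separately, the ``anchor'' mechanism is not just technically delicate but structurally contradictory as described: a vertex $u$ with $s$ anchors in $B$ together with its huge neighbourhood in $A$ already yields $s$ internally disjoint paths of length $2k$ from $u$ to any neighbour $a\in A$, i.e.\ a copy of $F_{s,k}$ in $G$ itself, while with at most $s-1$ anchors the non-edges from $u$ into $B$ cannot be completed to $F_{s,k}$ through $u$ as a degree-$(s+1)$ vertex.

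The paper avoids this trap by never taking the excluded vertices outside the bipartition. It splits off $t^s$ small pairs $(X_i,Y_i)$ with $|X_i|=|Y_i|=n^{\frac{1}{s+1}}$, one pair for each $s$-digit $t$-ary string $i$, deletes only the edges \emph{inside} each pair (total loss $t^s\cdot n^{\frac{2}{s+1}}=\alpha^s n^{\frac{s+2}{s+1}}$), and keeps everything else complete bipartite. The forcing is done by only $st$ gadgets $Z_{p,q}$ (paths of length $2k-2$), where $z^1_{p,q}$ joins to all $X_i$ whose $p$-th digit is $q$ and $z^{2k-1}_{p,q}$ to the corresponding $Y_i$; an edge between $X_a$ and $Y_b$ would close $s$ disjoint odd cycles exactly when $a=b$. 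Since at most one of $X_i,Y_i$ can survive in any induced complete bipartite subgraph containing $X_{t^s},Y_{t^s}$, one must discard $t^s\cdot n^{\frac{1}{s+1}}=\alpha^s n$ vertices, yet the edge loss stays at $O(\alpha n^{\frac{s+2}{s+1}})$. This product structure — $\alpha^s n$ excluded vertices bought for only $\alpha^s n^{\frac{s+2}{s+1}}$ missing edges — is the essential idea your proposal is missing.
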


\begin{lem}\label{thm-main}
Let $k,s\geq 2$. For sufficiently large $n$ and $0<\alpha<1$, if $G$ is an $n$-vertex maximal $F_{s,k}$-free graph with at least $n^{2}/4 - \alpha n^{\frac{s+2}{s+1}}$ edges, then $G$ contains an induced complete bipartite graph on $n-4\cdot (12sk)^{s+3}\alpha n$ vertices.
\end{lem}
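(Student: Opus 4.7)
The plan is to combine the Erd\H{o}s--Simonovits stability theorem with a parity-based path argument exploiting the maximality of $G$, then to bound the ``defective'' vertices via a K\H{o}v\'{a}ri--S\'{o}s--Tur\'{a}n style double count.

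First, since $F_{s,k}$ is $3$-color-critical, $ex(n,F_{s,k}) = t_2(n)$ for $n$ large, and the hypothesis $e(G) \geq n^2/4 - \alpha n^{(s+2)/(s+1)}$ places $G$ within $o(n^2)$ of extremal. Stability then produces a bipartition $V(G) = A \cup B$ with $|A|,|B| = n/2 \pm o(n)$; we may further assume this partition maximizes $e(A,B)$, so that every vertex has at least as many neighbors in the opposite part as in its own. Writing $w := e(A)+e(B)$ and $m := |A||B|-e(A,B)$ for the within-part edges and missing cross edges, the edge hypothesis rearranges to $m - w \leq \alpha n^{(s+2)/(s+1)} + O(1)$.

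Second, the key structural input is maximality combined with a parity observation. For any cross non-edge $\{v,u\}$ with $v \in A$, $u \in B$, the maximality of $G$ produces $s$ internally vertex-disjoint paths of length $2k$ from $v$ to $u$ in $G$. Each such path has even length but opposite-part endpoints, so parity forces it to use an odd number of within-part edges, hence at least one. Dually, since $G$ is $F_{s,k}$-free, for every edge $xy$ of $G$ the number of internally vertex-disjoint paths of length $2k$ from $x$ to $y$ in $G - xy$ is strictly less than $s$.

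Third, I would define the defect $d(v)$ of a vertex $v \in A$ (resp.\ $B$) as the number of its neighbors in $A$ (resp.\ $B$) plus the number of its non-neighbors in the opposite part, and set $X := \{v : d(v) > T\}$ for a threshold $T$ of order $(12sk)^{s+1}$. Since $\sum_v d(v) = 2(w+m)$, one has $|X| \leq 2(w+m)/T$, so it suffices to bound $w+m$. For this I would run a double count on triples $(f,P,e)$ where $f$ is a cross non-edge, $P$ is one of its $s$ internally disjoint paths of length $2k$ guaranteed by maximality, and $e$ a within-part edge on $P$. The lower bound is at least $sm$, while for each fixed within-part edge $e$ the number of non-edges $f$ whose disjoint-path bundle uses $e$ is controlled by a K\H{o}v\'{a}ri--S\'{o}s--Tur\'{a}n-style bound (too many shared bundles would assemble $s$ internally disjoint paths of length $2k$ across some edge and thereby create a forbidden $F_{s,k}$). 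Combining this with $m - w \leq \alpha n^{(s+2)/(s+1)}$ forces $w + m \leq (12sk)^{s+2}\alpha n^{(s+2)/(s+1)}$, giving $|X| \leq 4(12sk)^{s+3}\alpha n$ after calibrating $T$.

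The main obstacle is precisely this double-counting step: quantifying how many cross non-edges can share a given within-part edge among their disjoint-path bundles. The exponent $(s+2)/(s+1) = 1 + 1/(s+1)$ appears because $F_{s,k}$ encodes $s+1$ parallel slots at a fixed edge (the shared edge plus the $s$ internally disjoint paths), so the K\H{o}v\'{a}ri--S\'{o}s--Tur\'{a}n threshold for $K_{s+1,\cdot}$-type configurations governs the count. Once $X$ is removed, the remaining vertices have defect at most $T$, and a final greedy step deletes a vertex cover of the residual within-part edges together with a hitting set for the residual cross non-edges (each of size $O(|X|)$) to yield the induced complete bipartite subgraph on at least $n - 4(12sk)^{s+3}\alpha n$ vertices.
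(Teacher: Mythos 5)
Your outline diverges from the paper's argument and has gaps that I do not see how to close. The first is the premise of your ``Second'' step: maximality of $G$ only guarantees that $G+vu$ contains a copy of $F_{s,k}$ \emph{through} $vu$, and $vu$ need not play the role of the shared hub edge $ab$. If $vu$ plays the role of an edge interior to one of the $s$ cycles, you get no bundle of $s$ internally disjoint length-$2k$ paths from $v$ to $u$, and your parity observation does not apply. Handling exactly these non-hub cases is where most of the work lies (the paper splits the non-edges into three classes according to which edge of $F_{s,k}$ the new edge represents and treats each separately). Relatedly, your scheme never isolates the small exceptional set $T$ of vertices outside the bipartite core; in the paper's proof the decisive structural fact is that for every cross non-edge $xy$ the copy $F_{xy}$ must use neighbors of $x$ and of $y$ inside $T$ (this follows from the independence of the two parts plus the minimum-degree condition, which lets one reroute any path segment that stays inside $U\cup V$). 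Without that fact there is no handle on which non-edges can coexist.

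The second problem is quantitative: the numbers in your ``Third'' step do not close. Your double count gives $sm\le C\cdot w$, i.e.\ it bounds $m$ \emph{in terms of} $w$; combined with $m-w\le\alpha n^{(s+2)/(s+1)}$ (both inequalities bound $m$ from above) nothing bounds $w$, hence nothing bounds $w+m$. And even granting $w+m\le (12sk)^{s+2}\alpha n^{(s+2)/(s+1)}$, a constant threshold $T$ yields only $|X|\le 2(w+m)/T=O(\alpha n^{(s+2)/(s+1)})=O(\alpha n\cdot n^{1/(s+1)})$, which is a factor $n^{1/(s+1)}$ larger than the claimed $O(\alpha n)$; the final ``vertex cover of size $O(|X|)$'' for the residual bad pairs is likewise unjustified. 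The exponent $\tfrac{s+2}{s+1}$ does not come from a K\H{o}v\'ari--S\'os--Tur\'an threshold. In the paper it arises as follows: each deletion round removes a set $S_i$ that is the common neighborhood (inside $U$ or $V$) of a tuple of at most $s$ vertices of $T$, and each tuple can occur only once, so the number of rounds is $l=O(|T|^s)=O((h^2\varepsilon n)^s)$; each round accounts for at least $|S_i|^2/(16h^2)$ missing cross edges, so $\sum_i|S_i|^2=O(h^4\varepsilon n^2)$; Cauchy--Schwarz then gives $\sum_i|S_i|=O(h^{O(s)}\varepsilon^{(s+1)/2}n^{(s+2)/2})$, which equals $O(\alpha n)$ precisely when $\varepsilon=\alpha n^{-s/(s+1)}$. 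Any correct proof needs a mechanism of this type (few ``types'' of non-edges, indexed by small tuples from $T$, plus a quadratic-in-$|S_i|$ lower bound on missing edges per type); your proposal does not contain one.
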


In the rest of the paper, we prove Lemma \ref{thm-main2} in Section 2 and prove Lemma \ref{thm-main} in Section 3. We follow standard notation throughout.
Let $G$ be a graph. Denote by $\bar{G}$ the complement of $G$. For $v\in V(G)$, we use $N_G(v)$ to denote the set of neighbors of $v$ in $G$ and  let $\deg_G(v)=|N_G(v)|$.  Denote by $\delta(G)$ the minimum degree of $G$. Let $S$  be a subset of $V(G)$. We use $N_G(v,S)$ to denote the set of neighbors of $v$ in $S$ and let $\deg_G(v,S)=|N_G(v,S)|$. Denote by $G[S]$ and $G-S$ the subgraphs of $G$ induced by $S$ and $V(G)\setminus S$, respectively. When $S = \{v\}$, we simply write $G-v$ for $G-\{v\}$ .  Denote by $e_G(S)$ the number of edges of $G$ with both ends in $S$. For $xy\in E(\bar{G})$, let $G+xy$ be the graph obtained from $G$ by adding  $xy$. For $xy\in E(G)$, let $G-xy$ be the graph obtained from $G$ by deleting  $xy$. For any two disjoint subsets $X,Y$ of $V(G)$, let $G[X,Y]$ denote the bipartite subgraph of $G$ with the partite sets $X,Y$ and the edge set $$\{xy\in E(G)\colon x\in X \mbox{ and } y\in Y\}.$$
Denote by $e_G(X,Y)$ the number of edges in $G[X,Y]$. We also use $E[X,Y]$ and $\bar{E}[X,Y]$ to denote the edge set of $G[X,Y]$ and $\bar{G}[X,Y]$, respectively. We often omit the subscript when the underlying graph is clear. We also omit the floor and ceiling signs where they do not affect the arguments.

\section{Proof of Lemma \ref{thm-main2}}

In this section, we give a construction to show that for every small $\varepsilon >0$, there is a maximal $F_{s,k}$-free graph with at least $\frac{n^2}{4}-\varepsilon n^{\frac{s+2}{s+1}}$ edges, from which a positive fraction of vertices  has to be deleted to obtain an induced complete bipartite subgraph. First we introduce a function about the $t$-ary representations of positive integers, which will be used in our construction.

\begin{defn}
For every integer $s,t\geq 2$ and  $x\in [0,t^s-1]$, $x$ can be expressed uniquely as follows:
\[
x=q_{s-1}t^{s-1} +q_{s-2}t^{s-2}+\ldots+q_1t +q_0,
\]
where $q_0,q_1,\ldots,q_{s-1}\in [0,t-1]$.
We define the function $b_{t,s}(x,p)=q_p$ for $p=0,1,\ldots,s-1$.
\end{defn}

We construct a class of graphs, which contains the desired graph.

\begin{defn}\label{defn-1}
Given  $k\geq 2$, $s\geq 2$, $0<\alpha<\frac{1}{2}$ and $n\geq \frac{8k^2s^2}{\alpha}$. Let
$t = \alpha n^{\frac{1}{s+1}}$ and let $\mathcal{G}_{s,k,\alpha}(n)$ be a class of graphs as follows. A graph $G$ on $n$ vertices is in $\mathcal{G}_{s,k,\alpha}(n)$ if $V(G)$ can be partitioned into subsets
\[
X_0,\ldots,X_{t^s-1},X_{t^s}, Y_0,\ldots,Y_{t^s-1},Y_{t^s}
 \]
 and
 \[
 Z_{0,0},\ldots,Z_{0,t-1}, Z_{1,0},\ldots,Z_{1,t-1}, \ldots, Z_{s-1,0},\ldots,Z_{s-1,t-1}
\]
such that:

\begin{itemize}
  \item[(i)] For each $p=0,\ldots,s-1$ and $q=0,\ldots,t-1$, $|Z_{p,q}|=2k-1$ and $G[Z_{p,q}]$ contains a path of length $2k-2$, say $z_{p,q}^1z_{p,q}^2\ldots z_{p,q}^{2k-1}$.
  \item[(ii)] For each $i=0,1,\ldots,t^s-1$,
  \[
  |X_i|=|Y_i| = n^{\frac{1}{s+1}}.
  \]
  and $X_{t^s},Y_{t^s}$  is a balanced partition of
  \[
  V(G)\setminus \bigcup_{0\leq i\leq t^s-1}(X_i\cup Y_i) \setminus \bigcup_{\substack{0\leq p\leq s-1\\[2pt]0\leq q\leq t-1}} Z_{p,q}.
  \]
  \item[(iii)] For each $i=0,\ldots,t^s-1$, $G[X_i,Y_i]$ is empty and $G[X_{t^s},Y_{t^s}]$ is complete. For each $i,j\in \{0,\ldots ,t^s\}$ with $i\neq j$, $G[X_i,Y_j]$ is complete.
  \item[(iv)] Let $I(t,s,p,q)=\{i\in [0,t^s-1]:b_{t,s}(i,p)=q\}$. For each $p= 0,\ldots,s-1$ and each $q=0,\ldots,t-1$, $z_{p,q}^1$  is adjacent to every vertex in
  \[
      \bigcup_{i\in I(t,s,p,q)}X_i,
  \]
  and    $z_{p,q}^{2k-1}$ is adjacent to every vertex in
  \[
\bigcup_{i\in I(t,s,p,q)}Y_i.
  \]
\end{itemize}
\end{defn}

When refer to vertex classes of a graph in $\mathcal{G}_{s,k,\alpha}$, we use $X,Y,Z_p$ and $Z$ to denote
\[
\bigcup_{0\leq i\leq t^s}X_i,\ \bigcup_{0\leq i\leq t^s}Y_i,\ \bigcup_{0\leq q\leq t-1}Z_{p,q} \mbox{ and } \bigcup_{\substack{0\leq p\leq s-1\\[2pt]0\leq q\leq t-1}}Z_{p,q},
\]
respectively.

\begin{prop}\label{prop-2-3}
If $G$ is the graph in $\mathcal{G}_{s,k,\alpha}(n)$ with minimum number of edges, then $G$ is $F_{s,k}$-free.
\end{prop}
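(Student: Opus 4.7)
The plan is to assume for contradiction that the minimum-edge graph $G\in\mathcal{G}_{s,k,\alpha}(n)$ contains a copy of $F_{s,k}$, and then derive a contradiction from the uniqueness of $t$-ary representations.

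The first and most delicate step is to characterize every $C_{2k+1}$ in $G$. Because $G$ attains the minimum number of edges allowed by Definition~\ref{defn-1}, its edge set is exactly (a) the $2k-2$ path edges inside each $G[Z_{p,q}]$, (b) the complete bipartite edges between $X_i$ and $Y_j$ for all $i\neq j$ in $\{0,\ldots,t^s\}$ together with $G[X_{t^s},Y_{t^s}]$, and (c) the boundary edges from $z_{p,q}^1$ to $\bigcup_{i\in I(t,s,p,q)}X_i$ and from $z_{p,q}^{2k-1}$ to $\bigcup_{i\in I(t,s,p,q)}Y_i$. In particular, the interior vertices $z_{p,q}^2,\ldots,z_{p,q}^{2k-2}$ have no neighbours outside $Z_{p,q}$, and distinct $Z_{p,q}$'s are pairwise non-adjacent. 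For any cycle $C$ in $G$, let $\mu$ count the $(p,q)$ whose full path $z_{p,q}^1\cdots z_{p,q}^{2k-1}$ is traversed by $C$ (contributing two boundary edges plus $2k-2$ interior edges, i.e.\ $2k$ edges), let $\mu',\mu''$ count the ``stub'' visits of the forms $X\!-\!z_{p,q}^1\!-\!X$ and $Y\!-\!z_{p,q}^{2k-1}\!-\!Y$ (each using $2$ boundary edges and no interior edge), and let $\beta$ count $X$-$Y$ edges of $C$. Then $|E(C)|=\beta+2k\mu+2\mu'+2\mu''$. Contracting each full $Z$-traversal into a single $X$-$Y$ edge and deleting stubs produces a cycle in the bipartite graph on $X\cup Y$, so $\beta+\mu$ is even. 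Setting $|E(C)|=2k+1$ and $k\geq 2$ forces the unique nonnegative integer solution $\mu=\beta=1$, $\mu'=\mu''=0$. Hence every $C_{2k+1}$ of $G$ has the canonical form
\[
x,\ z_{p,q}^1,\ z_{p,q}^2,\ \ldots,\ z_{p,q}^{2k-1},\ y,\ x,
\]
with $x\in X_i$, $y\in Y_j$, $i\neq j$ and $i,j\in I(t,s,p,q)$.

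Next, I would locate the shared edge $uv$ of the supposed $F_{s,k}\subseteq G$. The edges of the canonical cycle come in four flavours: $X$-$Y$, $X$-$z_{p,q}^1$, $z_{p,q}^{2k-1}$-$Y$, and interior $Z$-$Z$. If $uv$ is of any of the last three flavours, then each of the $s$ cycles containing $uv$ is forced to use the very same $Z_{p,q}$-path, so the corresponding $(u,v)$-paths share every interior vertex $z_{p,q}^2,\ldots,z_{p,q}^{2k-2}$ (and, in the second and third flavours, also $z_{p,q}^1$ or $z_{p,q}^{2k-1}$), contradicting internal disjointness since $s\geq 2$. Therefore $uv$ is an $X$-$Y$ edge; write $u=x_0\in X_{i^*}$ and $v=y_0\in Y_{j^*}$ with $i^*\neq j^*$ and both indices in $[0,t^s-1]$.

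Finally, let the $s$ internally disjoint $(u,v)$-paths of length $2k$ correspond to traversals of $Z_{p_1,q_1},\ldots,Z_{p_s,q_s}$. These pairs must be pairwise distinct, otherwise two paths would share interior $Z$-vertices. Each cycle forces $i^*,j^*\in I(t,s,p_\ell,q_\ell)$, equivalently $b_{t,s}(i^*,p_\ell)=q_\ell=b_{t,s}(j^*,p_\ell)$ for every $\ell$. Two pairs sharing a first coordinate $p_\ell$ would then share their second coordinate as well, so $p_1,\ldots,p_s$ are $s$ distinct values in the $s$-element set $\{0,1,\ldots,s-1\}$ and hence exhaust it. Consequently $b_{t,s}(i^*,p)=b_{t,s}(j^*,p)$ for every digit position $p$, and the uniqueness of the $t$-ary expansion yields $i^*=j^*$, the desired contradiction. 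The main obstacle is the parity and edge-count bookkeeping in the first step that pins every $C_{2k+1}$ to its canonical form; once that structural lemma is in hand, the digit-uniqueness argument in the last step finishes the proof in a few lines.
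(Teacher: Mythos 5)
Your proof is correct and follows essentially the same strategy as the paper's: both arguments use parity (an odd cycle cannot live in the bipartite part of $G$) together with the degree-two interior vertices of the $Z_{p,q}$-paths to force each of the $s$ cycles to consist of one $X$-vertex, one $Y$-vertex and one full $Z_{p,q}$-path, and then conclude via distinctness of the digits $p_\ell$ and uniqueness of the $t$-ary expansion. The only cosmetic difference is that you package the first step as a standalone edge-count classification of all $(2k+1)$-cycles, whereas the paper argues directly on the $s$ cycles $P^i+xy$ using the independent sets $Z^0$, $X\cup Z^2$, $Y\cup Z^1$.
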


\begin{proof}
Suppose not, let $H$ be a copy of $F_{s,k}$ in $G$. By Definition \ref{defn-1} (i), $G[Z_{p,q}]$ is a path of length $2k-2$ for $p=0,1,\ldots,s-1$ and $q=0,1,\ldots,t-1$. Note that $z_{p,q}^{k}$ is the middle vertex on the path $G[Z_{p,q}]$.  Let
\begin{equation*}
\left\{
\begin{array}{l}Z_{p,q}^1= \{z_{p,q}^r\colon r< k \mbox{ and $r$ is odd}\} \cup \{z_{p,q}^r\colon r> k \mbox{ and $r$ is even}\},\\[6pt]
Z_{p,q}^2=\{z_{p,q}^r\colon r< k \mbox{ and $r$ is even}\} \cup \{z_{p,q}^r\colon r> k \mbox{ and $r$ is odd}\}.
\end{array}\right.
\end{equation*}
Clearly, $Z_{p,q} = Z_{p,q}^1\cup Z_{p,q}^2 \cup \{z_{p,q}^{k}\}$. By Definition \ref{defn-1} (iv), $z_{p,q}^1$ is not adjacent to any vertex in $Y$ and $z_{p,q}^{2k-1}$ is not adjacent to any vertex in $X$.  It follows that both $X\cup Z_{p,q}^2$ and $Y\cup Z_{p,q}^1$ are independent sets of $G$. Let
\[
Z^0 =\{z_{p,q}^{k}\colon 0\leq p\leq s-1, 0\leq q\leq t-1\},\ Z^1 = \bigcup_{\substack {0\leq p\leq s-1\\[2pt] 0\leq q\leq t-1}} Z_{p,q}^1 \mbox{ and } Z^2 = \bigcup_{\substack {0\leq p\leq s-1\\[2pt] 0\leq q\leq t-1}} Z_{p,q}^2.
\]
Then $Z^0$, $X\cup Z^2$ and $Y\cup Z^1$ are all independent sets of $G$. Let $xy$ be the common edge of $s$ cycles in $H$, and let $P^0,P^1,\ldots,P^{s-1}$ be $s$ paths of $H-xy$. Since $\deg_H(x)= s+1> 2$,  $\deg_H(y)= s+1> 2$ and $\deg_G(z_{p,q}^{k})=2$ for every $p\in [0,s-1]$ and $q\in [0,t-1]$, we have $\{x,y\}\cap Z^0=\emptyset$. Since $G-Z_0$ is bipartite and $P^i+xy$ is an odd cycle for $i=0,1,\ldots,s-1$, we see that $|V(P^i)\cap Z^0|\geq 1$, and let $z_{p_i,q_i}^{k}\in V(P^i)\cap Z^0$. By Definition \ref{defn-1} (i),  $G[Z_{p_i,q_i}]=z_{p_i,q_i}^1z_{p_i,q_i}^2\ldots z_{p_i,q_i}^{2k-1}$ is a subpath of $P^i$.  Then there are exactly two vertices on  $P^i+xy$ that are not in $Z_{p_i,q_i}$. By Definition \ref{defn-1} (iv), all the neighbors of $z_{p_i,q_i}^1$ except $z_{p_i,q_i}^2$ are in $X\setminus X_{t^s}$ and all the neighbors of $z_{p_i,q_i}^{2k-1}$ except $z_{p_i,q_i}^{2k-2}$ are in $Y\setminus Y_{t^s}$. Hence $V(P^i+xy)$ has  one vertex in $X\setminus X_{t^s}$,  one vertex in $Y\setminus Y_{t^s}$  and  all the other vertices in $Z_{p_i,q_i}$ for each $i=0,1,\ldots,s-1$.

For distinct $i, j\in \{0,1,\ldots,s-1\}$, we claim that $p_i\neq p_j$ or $q_i\neq q_j$. Otherwise, we have $V(P^i+xy)\cap V(P^j+xy)\supset Z_{p_i,q_i}$, implying that $|V(P^i+xy)\cap V(P^j+xy)|\geq 2k-1\geq 3$, a contradiction. (Note that here is the only place we use $k\geq 2$ in the proof and explain that the construction fails for $k=1$.) Thus $Z_{p_i,q_i}$ and $Z_{p_j,q_j}$ are disjoint, implying that  $Z_{p_i,q_i}, Z_{p_j,q_j}\subset V(H)\setminus\{x,y\}$. Moreover, one of $x, y$ is the common neighbor of $z_{p_i,q_i}^1$ and $z_{p_j,q_j}^1$ and the other is  the common neighbor of $z_{p_i,q_i}^{2k-1}$ and $z_{p_j,q_j}^{2k-1}$. By Definition \ref{defn-1} (iv),  $\{x,y\}\subset (X\setminus X_{t^s})\cup (Y\setminus Y_{t^s})$. Without loss of generality, we may assume that $x\in X_a$ and $y\in Y_b$ with $a,b\in [0,t^s-1]$. Since $xy$ is an edge in $H$, we have $a\neq b$.

Then $x$ is the common neighbor of $z^1_{p_0,q_0},z^1_{p_1,q_1},\ldots,z^1_{p_{s-1},q_{s-1}}$ and $y$ is the common neighbor of $z^{2k-1}_{p_0,q_0},z^{2k-1}_{p_1,q_1},\ldots,z^{2k-1}_{p_{s-1},q_{s-1}}$. By Definition \ref{defn-1} (iv), we have
$a\in  I(t,s,p_i,q_i)$  and $b\in  I(t,s,p_i,q_i)$, implying that
$b_{t,s}(a,p_i)=q_i$ and $b_{t,s}(b,p_i)=q_i$ for  $i=0,1,\ldots,s-1$. If $p_i= p_j$ for  some $i\neq j$, then $q_i=b_{t,s}(a,p_i)=b_{t,s}(a,p_j)=q_j$, contradicting the fact that $p_i\neq p_j$ or $q_i\neq q_j$. Thus $p_0,p_1,\ldots,p_{s-1}$ is a permutation of $\{0,1,\ldots,s-1\}$. Without loss of generality, we assume that $p_i=i$ for $i=0,1,\ldots,s-1$, then
\[
a=q_{s-1}t^{s-1} +q_{s-2}t^{s-2}+\ldots+q_1t +q_0=b,
\]
a contradiction. Therefore, $G$ is $F_{s,k}$-free.
\end{proof}

Now we are in a position to prove Lemma \ref{thm-main2}.

\begin{proof}[Proof of Lemma \ref{thm-main2}.]
By Proposition \ref{prop-2-3}, we may choose a maximal $F_{s,k}$-free graph  $G$ in $\mathcal{G}_{s,k,\alpha}(n)$.

\begin{claim}
Both $X$ and $Y$ are independent sets in $G$.
\end{claim}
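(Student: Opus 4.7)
The plan is to argue by contradiction. Suppose some edge $xy\in E(G)$ has both ends in $X$ (the $Y$ case is symmetric); I will produce a copy of $F_{s,k}$ containing $xy$, contradicting the hypothesis that $G$ is $F_{s,k}$-free.

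The structural lever is Definition~\ref{defn-1}(iii): the bipartite graph $G[X,Y]$ is nearly complete, with non-edges only across the diagonal pairs $X_i$--$Y_i$ for $0\le i\le t^s-1$. Consequently, each vertex of $X$ has at most $|Y_i|=n^{\frac{1}{s+1}}$ non-neighbors in $Y$, and symmetrically for $Y$. A straightforward size count (using $|Z|=ts(2k-1)$ and $|X_i|=|Y_i|=n^{\frac{1}{s+1}}$) gives
\[
|X|,\,|Y|=\tfrac{n}{2}-O(ks\,n^{\frac{1}{s+1}}),
\]
so under the hypothesis $n\ge 8k^2s^2/\alpha$ the bipartite graph $G[X,Y]$ has very high minimum degree on both sides, far more than what the construction will consume.

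Using this I would build $s$ internally disjoint $(x,y)$-paths of length $2k$ living entirely in $G[X,Y]$, each of the alternating shape
\[
P^{(j)}\colon\; x,\; y^{(j)}_1,\; x^{(j)}_1,\; y^{(j)}_2,\; \ldots,\; x^{(j)}_{k-1},\; y^{(j)}_k,\; y\qquad (j=1,\ldots,s).
\]
The construction is greedy: process $j=1,\ldots,s$ in turn, and within each $P^{(j)}$ fill in the internal vertices left-to-right, at every step picking an unused vertex of the correct side ($Y$ at odd positions, $X\setminus\{x,y\}$ at even positions) adjacent to the current end. For the last internal vertex $y^{(j)}_k$ I would insist it be a common neighbor of $x^{(j)}_{k-1}$ and $y$; for every other internal vertex only adjacency to its predecessor is required. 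At each step the vertex to be selected must avoid at most $2n^{\frac{1}{s+1}}$ local non-neighbors plus at most $s(2k-1)+2$ previously used vertices, which is negligible compared to the $(1-o(1))n/2$ candidates on either side.

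Closing each path $P^{(j)}$ with the edge $xy$ produces $s$ copies of $C_{2k+1}$ sharing exactly the edge $xy$---that is, a copy of $F_{s,k}$---contradicting $F_{s,k}$-freeness of $G$. Hence $G[X]$ is empty, and swapping the roles of $X$ and $Y$ in the same argument shows $G[Y]$ is empty as well. The only delicate point is the greedy book-keeping needed to land on the prescribed endpoint $y$ at the final step; everything else reduces to the minimum-degree bound on $G[X,Y]$ which is comfortably supplied by the lower bound on $n$.
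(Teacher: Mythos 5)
Your proof is correct and follows the same strategy as the paper: an edge inside $X$ together with the near-complete bipartite graph $G[X,Y]$ yields a copy of $F_{s,k}$, contradicting $F_{s,k}$-freeness. The paper takes a small shortcut by embedding entirely into $X\cup Y_{t^s}$, where $G[X,Y_{t^s}]$ is genuinely complete bipartite and both sides exceed $|V(F_{s,k})|$ (invoking $3$-color-criticality directly), so no greedy path-building is needed; your greedy construction of the $s$ paths in the nearly-complete $G[X,Y]$ is a valid, slightly longer substitute.
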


\begin{proof}
By Definition \ref{defn-1} (iii), $G[X,Y_{t^s}]$ is complete bipartite. Note that
\begin{align*}
|X|>|Y_{t^s}|&=\frac{n-|Z|-|X\setminus X_{t^s}|-|Y\setminus Y_{t^s}|}{2}\\
&=\frac{n-st(2k-1)-2t^sn^{\frac{1}{s+1}}}{2}\\
&=\frac{n-s \alpha n^{\frac{1}{s+1}}(2k-1)-2(\alpha n^{\frac{1}{s+1}})^s n^{\frac{1}{s+1}}}{2}\\
&> \frac{n}{2}-ks \alpha n^{\frac{1}{s+1}}-\alpha^s n.
\end{align*}
Note that $s\geq 2, \alpha<\frac{1}{2}$ and $n\geq \frac{8k^2s^2}{\alpha}$. Then
\begin{align*}
|X|>|Y_{t^s}|> \frac{n}{2}-\frac{ks n^{\frac{1}{3}}}{2}-\frac{n}{4}\geq \frac{n^{\frac{1}{3}}}{4}(n^{\frac{2}{3}}-2ks ) \geq 2sk>|V(F_{s,k})|.
\end{align*}
If there is an edge $e$ in $G[X]$, then it is easy to find a copy of $F_{s,k}$ in $G[X\cup Y_{t^s}]$ because $F_{s,k}$ is 3-color-critical. Thus $X$ is an independent set of $G$. Similarly, $Y$ is an independent set of $G$.
\end{proof}

\begin{claim}
For $i=0,1,\ldots,t^s-1$, $G[X_i,Y_i]$ is empty.
\end{claim}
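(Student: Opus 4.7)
My plan is to argue by contradiction. Suppose, for some $i \in [0, t^s-1]$, that $G[X_i, Y_i]$ contains an edge $xy$ with $x \in X_i$ and $y \in Y_i$; I will exhibit a copy of $F_{s,k}$ in $G$, contradicting $F_{s,k}$-freeness of $G$.

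The key observation is that the indexing function $b_{t,s}$ was set up so that every index $i$ witnesses $s$ gadget paths joining $X_i$ to $Y_i$, one living inside each ``layer'' $Z_p$. Concretely, for each $p \in \{0, 1, \ldots, s-1\}$ I would set $q_p := b_{t,s}(i, p)$, note that $i \in I(t, s, p, q_p)$ by definition of $I$, and invoke Definition \ref{defn-1}(iv) to obtain both $x z_{p, q_p}^1 \in E(G)$ and $z_{p, q_p}^{2k-1} y \in E(G)$. Combining these with the length-$(2k-2)$ path $z_{p, q_p}^1 z_{p, q_p}^2 \cdots z_{p, q_p}^{2k-1}$ guaranteed by Definition \ref{defn-1}(i) produces an $x$--$y$ path
\[
P_p := x\, z_{p, q_p}^1\, z_{p, q_p}^2 \cdots z_{p, q_p}^{2k-1}\, y
\]
of length exactly $2k$, whose internal vertices all lie in $Z_{p, q_p}$.

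Running this for $p = 0, 1, \ldots, s-1$ gives $s$ such paths $P_0, P_1, \ldots, P_{s-1}$. Since $Z_{p, q_p}$ and $Z_{p', q_{p'}}$ are disjoint whenever $p \neq p'$ (the $Z_{p,q}$'s in the partition carry distinct first index), these paths are pairwise internally vertex-disjoint. Adjoining the (assumed) edge $xy$ converts each $P_p$ into a cycle of length $2k+1$, and the $s$ resulting cycles share precisely the single edge $xy$ — that is, they form a copy of $F_{s,k}$ sitting inside $G$, which is the contradiction we seek.

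I do not foresee any real obstacle: the proof is essentially a direct construction of $F_{s,k}$ that leans entirely on how $b_{t,s}$, and hence the attachment rule in Definition \ref{defn-1}(iv), were designed. The only point requiring a sentence of care is the internal disjointness of the $P_p$'s, which reduces to the disjointness of $Z_{p, q_p}$ across distinct values of $p$.
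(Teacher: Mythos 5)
Your proof is correct and is essentially identical to the paper's: both take a putative edge $xy$ with $x\in X_i$, $y\in Y_i$, write $i$ in base $t$ to get $q_p=b_{t,s}(i,p)$, and observe that $x$, $y$ together with the $s$ pairwise disjoint gadget paths $Z_{0,q_0},\ldots,Z_{s-1,q_{s-1}}$ form a copy of $F_{s,k}$ via Definition~\ref{defn-1}(i) and (iv). No gaps.
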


\begin{proof}
Suppose not, and let $xy$ be an edge with $x\in X_i$ and $y\in Y_i$. Assume that
\[
i = q_{s-1}t^{s-1} +q_{s-2}t^{s-2}+\ldots+q_1t +q_0.
\]
By Definition \ref{defn-1} (iv), $z^1_{0,q_0}, \ldots, z^1_{s-1,q_{s-1}}$ have a common neighbor $x$, and $z^{2k-1}_{0,q_0}, \ldots, z^{2k-1}_{s-1,q_{s-1}}$ have a common neighbor $y$. It follows that $G[Z_{0,q_0}\cup\ldots \cup Z_{s-1,q_{s-1}}\cup\{x,y\}]$ contains a copy of $F_{s,k}$, contradicting the fact that $G$ is $F_{s,k}$-free.
\end{proof}

By Claims 1 and 2, we have
\begin{align}\label{eq-4-1}
e(G[X\cup Y]) &= |X||Y|- \sum_{i=0}^{t^s-1} |X_i||Y_i|\nonumber\\
&\geq\left( \frac{ n-(2k-1)st}{2}\right)^2 - t^s \left( n^{\frac{1}{s+1}}\right)^2\nonumber\\
&=\left( \frac{ n-(2k-1)s\alpha n^{\frac{1}{s+1}}}{2}\right)^2 - \alpha^s n^{\frac{s}{s+1}} n^{\frac{2}{s+1}}\nonumber\\
&> \left( \frac{n}{2}- ks\alpha n^{\frac{1}{s+1}}\right)^2 - \alpha^s n^{\frac{s+2}{s+1}}\nonumber\\
&> \frac{n^2}{4}- k s\alpha n^{\frac{s+2}{s+1}}-\alpha^s n^{\frac{s+2}{s+1}}\nonumber\\
&> \frac{n^2}{4}-2 ks\alpha n^{\frac{s+2}{s+1}}.
\end{align}

In the following, we shall show that any induced complete bipartite subgraph of $G$ has at most $(1-\frac{\alpha}{4})n$ vertices. Assume that $H$ is a largest induced complete bipartite subgraph of $G$ with vertex classes $A$ and $B$. Note that each vertex of $X_i$ (or $Y_i$)
plays the same role in $G$. If there is a vertex in $X_i$ (or $Y_i$)  belongs to $V(H)$, then by the maximality of $H$, every vertex of $X_i$ (or $Y_i$) belongs to $V(H)$.

Suppose first that $X_{t^s}\cap( A\cup B)=\emptyset$ or $Y_{t^s}\cap( A\cup B)=\emptyset$. Then
\begin{align}\label{ineq-4-2}
|A|+|B| &\leq n-\min \{|X_{t^s}|,|Y_{t^s}|\}\nonumber\\
&= n -\frac{n-|Z|-|X\setminus X_{t^s}|-|Y\setminus Y_{t^s}|}{2}\nonumber\\
&= \frac{n+|Z|+|X\setminus X_{t^s}|+|Y\setminus Y_{t^s}|}{2}\nonumber\\
&=\frac{n+s \alpha n^{\frac{1}{s+1}}(2k-1)+2(\alpha n^{\frac{1}{s+1}})^s n^{\frac{1}{s+1}}}{2}\nonumber\\
&< \frac{n}{2}+ ks\alpha n^{\frac{1}{s+1}}+\alpha^s n\nonumber\\
&\leq  \left(1-\alpha^s\right)n.
\end{align}

Now suppose that $X_{t^s}\cap( A\cup B)\neq \emptyset$ and $Y_{t^s}\cap( A\cup B)\neq\emptyset$. Then $X_{t^s},Y_{t^s}\subset A\cup B$. Without loss of generality, we assume that $X_{t^s}\subset A$ and $Y_{t^s}\subset B$.
Since $G[X_i,Y_i]$ ($0\leq i\leq t^s-1$) is empty, and both $G[X_i,Y_{t^s}]$ and $G[X_{t^s},Y_i]$ are complete bipartite, it follows that at most one of $X_i$ and $Y_i$ is in $A\cup B$. Hence $H$ is missing at least $t^s$ of $X_0, \ldots, X_{t^s-1}, Y_0, \ldots, Y_{t^s-1}$ and so
\begin{align}\label{eq-4-4}
|A\cup B| &\leq n-t^s n^{\frac{1}{s+1}} = n- \alpha^s n = \left(1-\alpha^s\right)n.
\end{align}
This completes the proof.
\end{proof}

\section{Proof of Lemma \ref{thm-main}}

In this section, we prove a stability theorem for maximal $F_{s,k}$-free graphs. We say that a vertex of a graph $G$ is {\it color-critical}, if deleting
that vertex results in $G$ with smaller chromatic number. The following two results are needed.

\begin{lem}[\cite{gerbner}]\label{lem-3-1}
Let $F$ be a 3-chromatic graph with a color-critical vertex and $n$ be
sufficiently large. Let $\frac{20|V(F)|}{n}<\varepsilon<\frac{1}{11|V(F)|^2}$. If $G$ is an $n$-vertex $F$-free graph with $|E(G)|\geq ex(n,F)-\varepsilon n^2$,  then there is a bipartite subgraph $G'$ of $G$ with at least $(1-12|V(F)|\varepsilon )n$ vertices, at least $ex(n,F)-13|V(F)|\varepsilon n^2$ edges and minimum
degree at least $\left(\frac{1}{2}-\frac{1}{11|V(F)|}\right)n$ such that every vertex of $G'$ is adjacent in $G$ to at most $|V(F)|$ vertices in the same partite set of $G'$.
\end{lem}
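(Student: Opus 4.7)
The plan is Erd\H{o}s--Simonovits stability plus a switching refinement, followed by an explicit embedding argument that exploits the color-critical vertex of $F$. Since $F$ is $3$-chromatic with a color-critical vertex, the classical result of Simonovits gives $\mathrm{ex}(n,F)=\lfloor n^2/4\rfloor$ for large $n$, so $|E(G)|\geq n^2/4-\varepsilon n^2$. By the Erd\H{o}s--Simonovits stability theorem, there is a bipartition $V(G)=A_0\cup B_0$ with $e(G[A_0])+e(G[B_0])\leq c_1\varepsilon n^2$ and $||A_0|-n/2|,||B_0|-n/2|\leq c_2\sqrt{\varepsilon}\,n$. I would then run the standard switching step: while some vertex $v$ has more neighbors in its own class than across, move $v$ to the opposite class. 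Each move strictly increases the cut $e_G(A,B)\leq n^2/4$, so the procedure terminates at a bipartition $(A,B)$ in which every vertex satisfies $\deg_G(v,\mathrm{own})\leq\deg_G(v,\mathrm{opp})$; since the cut only grows, $e(G[A])+e(G[B])$ remains $O(\varepsilon n^2)$.

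Write $f:=|V(F)|$. Iteratively delete any vertex whose cross-part degree has dropped below $(\frac{1}{2}-\frac{1}{11f})n$, let $D$ be the resulting deletion set, and put $G':=G[(A\cup B)\setminus D]$ with the induced bipartition. A double-count using $e_G(A,B)\geq n^2/4-O(\varepsilon n^2)$ bounds $|D|\leq 12f\varepsilon n$: each deleted vertex contributes $\Omega(n/f)$ to the cross-edge deficit while the total deficit is $O(\varepsilon n^2)$. This yields $|V(G')|\geq(1-12f\varepsilon)n$, $\delta(G')\geq(\frac{1}{2}-\frac{1}{11f})n$, and, after subtracting the at most $|D|\cdot n\leq 12f\varepsilon n^2$ edges incident to $D$ and the $O(\varepsilon n^2)$ in-part edges, $|E(G')|\geq\mathrm{ex}(n,F)-13f\varepsilon n^2$.

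For the final clause, let $v^*$ be a color-critical vertex of $F$, so $F-v^*$ is bipartite with parts $X,Y$, and $v^*$ has neighbors in both $X$ and $Y$ (otherwise $\chi(F)\leq 2$). Suppose for contradiction that some $v\in A\cap V(G')$ has neighbors $u_1,\dots,u_{f+1}\in A\cap V(G')$. I would embed $F$ into $G$ by sending $v^*\mapsto v$, embedding $N_F(v^*)\cap Y$ injectively into $\{u_1,\dots,u_{f+1}\}$, embedding $N_F(v^*)\cap X$ into $N_G(v)\cap B\cap V(G')$, and then greedily extending to the remaining vertices of $Y$ (into $A\cap V(G')$) and of $X$ (into $B\cap V(G')$). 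At each greedy step, the number of legal images is at least $\frac{n}{2}-f\cdot\frac{n}{11f}-f=\frac{n}{2}-\frac{n}{11}-f>0$, since each of the at most $f$ already-embedded vertices in the opposite part misses at most $\frac{n}{11f}$ potential neighbors. This produces a copy of $F$ in $G$, contradicting $F$-freeness and finishing the proof.

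The main obstacle is managing the constants tightly enough to match the explicit bounds $12|V(F)|\varepsilon n$ and $13|V(F)|\varepsilon n^2$: the switching procedure can create new low-cross-degree vertices on the fly, so one must verify that its net effect is a small symmetric difference from $(A_0,B_0)$, and the iterative removal of $D$ must be controlled so that batched deletions do not cascade beyond the edge deficit budget. The embedding step itself is routine once the minimum-degree condition is in hand.
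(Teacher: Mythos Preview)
The paper does not prove this lemma at all: it is quoted verbatim from Gerbner's paper \cite{gerbner} and used as a black box, so there is no ``paper's own proof'' to compare your sketch against.

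Your outline (Erd\H{o}s--Simonovits stability, then max-cut switching, then iterative deletion of low cross-degree vertices, then an embedding using the critical vertex) is the standard route and is essentially how such statements are proved. Two remarks. First, your opening assertion that Simonovits gives $\mathrm{ex}(n,F)=\lfloor n^2/4\rfloor$ is not correct under the stated hypothesis: Simonovits' theorem requires a color-critical \emph{edge}, whereas the lemma only assumes a color-critical \emph{vertex}. The bowtie (two triangles sharing a vertex) has a critical vertex but no critical edge, and its extremal number exceeds $\lfloor n^2/4\rfloor$. This is easy to patch: since $\chi(F)=3$ you have $\lfloor n^2/4\rfloor\le \mathrm{ex}(n,F)\le n^2/4+o(n^2)$ by Erd\H{o}s--Stone, which is all you actually use. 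Second, in the embedding step you should note that the $u_i$'s lie in $A\cap V(G')$ and hence inherit the minimum cross-degree bound $(\tfrac12-\tfrac{1}{11f})n$ into $B\cap V(G')$; this is what lets you later embed their $X$-neighbours greedily. With those two points addressed, the argument is sound; as you correctly flag, the only real work is bookkeeping the constants so that the explicit $12|V(F)|\varepsilon n$ and $13|V(F)|\varepsilon n^2$ come out, and controlling the cascade in the iterative deletion by charging each deleted vertex a defect of order $n/f$ against a total defect of $O(\varepsilon n^2)$.
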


\begin{thm}[\cite{sim1974}]\label{thm-3-2}
 Let $F$ be an $(r+1)$-color-critical graph. There exists an $n_0$ such that if $n > n_0$, then $ex(n,F)=t_r(n)$.
\end{thm}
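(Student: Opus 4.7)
The plan is to combine the Erdős–Simonovits stability method with a direct embedding argument exploiting the color-critical edge, following the classical Simonovits strategy. First I would note the trivial lower bound: since $\chi(F)=r+1$, the Turán graph $T_r(n)$ is $r$-chromatic and hence $F$-free, giving $ex(n,F)\ge t_r(n)$.

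For the matching upper bound, let $G$ be an $n$-vertex $F$-free graph with $e(G)\ge t_r(n)$. By the Erdős–Stone–Simonovits theorem $e(G)=\bigl(1-\tfrac1r+o(1)\bigr)\binom{n}{2}$, and by the Erdős–Simonovits stability theorem there is a partition $V(G)=V_1\cup\cdots\cup V_r$ with $|V_i|=n/r+o(n)$ and only $o(n^2)$ edges violating the $r$-partite structure (either lying inside some $V_i$ or absent between some $V_i$ and $V_j$). I would then perform a standard minimum-degree cleanup: arguing by induction on $n$, I may assume every vertex has degree at least $\delta(T_r(n))\ge (1-1/r)n-1$, since otherwise deleting a low-degree vertex produces an $F$-free graph on $n-1$ vertices with more than $ex(n-1,F)$ edges, contradicting induction.

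The heart of the proof is showing that $G$ is genuinely $r$-partite, i.e.\ $G[V_i]$ is empty for each $i$. Let $uv$ be the color-critical edge and fix a proper $r$-coloring $\phi$ of $F-uv$; necessarily $\phi(u)=\phi(v)$, else $\phi$ would extend to a proper $r$-coloring of $F$. Suppose for contradiction that $xy\in E(G)$ with $x,y\in V_i$. I would attempt to embed $F-uv$ into $G$ by sending color class $\phi^{-1}(j)$ into $V_{\sigma(j)}$ for an appropriate bijection $\sigma$ with $\sigma(\phi(u))=i$, and $u\mapsto x$, $v\mapsto y$. A greedy embedding in order of expanding neighborhoods works because the minimum degree is $(1-1/r)n-o(n)$, only $o(n^2)$ edges violate the partition, and $|V(F)|$ is a constant: the common neighborhood of any fixed‑size already‑embedded set, restricted to the required part $V_{\sigma(j)}$, still has size $\Omega(n)$. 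Adding the edge $xy$ as the image of $uv$ produces a copy of $F$ in $G$, a contradiction. Hence $G$ is $r$-partite, and since $T_r(n)$ uniquely maximizes edges among $r$-partite graphs on $n$ vertices, $e(G)\le t_r(n)$, finishing the proof with $n_0$ chosen large enough for the stability inputs to apply.

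The main obstacle is executing the greedy embedding cleanly. One must order the vertices of $F-uv$ so that $u,v$ are placed first and each subsequent vertex has many already‑placed neighbors in a single color class, then show that the accumulated "bad" vertices (violating the $r$-partite structure or having too few neighbors across the partition) are $o(n)$ at every stage, so that the codegree available for the next vertex remains $\Omega(n)$. Once this embedding lemma is in hand, the rest is bookkeeping; the value of $n_0$ is whatever threshold makes the stability constants and the Erdős–Stone error term sufficiently small relative to $|V(F)|$.
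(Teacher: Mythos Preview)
The paper does not supply a proof of this statement: Theorem~\ref{thm-3-2} is quoted from Simonovits~\cite{sim1974} and used as a black box in the proof of Lemma~\ref{lem-3-3}. There is therefore no paper-proof to compare against; what you have written is the standard modern stability-based route to Simonovits' theorem.

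As a sketch it is essentially correct, and you rightly identify the greedy embedding as the crux. Two points deserve more care than you indicate. First, ``arguing by induction on $n$'' for the minimum-degree cleanup hides a real issue: the base case of that induction is precisely the statement under proof, and the deletion process could in principle cascade below whatever threshold you fix for stability. The standard fix is to delete vertices of degree below $(1-\tfrac{1}{r}-\gamma)|V(G_{\text{current}})|$ and track the edge surplus over $t_r(\cdot)$ to show the process must halt while the graph is still large. Second, the sharp minimum degree $\delta(T_r(n))$ alone does not force a vertex to have $\Omega(n)$ neighbours in \emph{each} of the other classes; all of its non-neighbours could concentrate in a single class. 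One must therefore first reassign each vertex to the class in which it has fewest neighbours and then set aside the $o(n)$ vertices whose cross-degrees remain atypical before the greedy embedding of $F-uv$ (with $u\mapsto x$, $v\mapsto y$) can succeed. With these two refinements the argument goes through.
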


We find a large induced bipartite subgraphs with useful structures in maximal $F_{s,k}$-free graphs by the following lemma.

\begin{lem}\label{lem-3-3}
Let $G$ be an $n$-vertex maximal $F_{s,k}$-free graph with at least $\frac{n^{2}}{4} - \varepsilon n^2$ edges and let $h=|V(F_{s,k})|$. Then there is a partition $(U, V, T)$ of $V(G)$ such that
\begin{itemize}
  \item[(i)] $\left(\frac{1}{2}-\frac{1}{10h}\right)n\leq |U|,|V|\leq \left(\frac{1}{2}+\frac{1}{10h}\right)n$ and $|T|\leq 30h^2\varepsilon n$;
  \item[(ii)] $G[U\cup V]$ is an induced bipartite subgraph of $G$ with partite sets $U,V$, minimum degree $\left(\frac{1}{2}-\frac{1}{10h}\right)n$ and at least $\frac{n^2}{4}-25h^2\varepsilon n^2$ edges;
  \item[(iii)] for every $x\in T$, if $x$ has neighbors in $U$(or $V$), then it has at least $h+1$ neighbors in $U$(or $V$).
\end{itemize}
\end{lem}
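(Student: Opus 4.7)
My plan is to apply Lemma \ref{lem-3-1} to obtain a near-bipartite skeleton for $G$ and then polish it by migrating a small number of vertices into $T$. I first verify that Lemma \ref{lem-3-1} applies: $F_{s,k}$ is $3$-chromatic with a color-critical vertex (either endpoint of the shared edge is such, since $F_{s,k}$ minus that endpoint is a union of $s$ paths of length $2k-1$ sharing an endpoint, hence a tree and thus bipartite), and by Theorem \ref{thm-3-2} we have $\ex(n, F_{s,k}) = t_2(n) = \lfloor n^2/4 \rfloor$ for $n$ large. Thus $e(G) \geq \ex(n, F_{s,k}) - \varepsilon n^2$, and Lemma \ref{lem-3-1} produces a bipartite subgraph $G_0$ of $G$ with parts $U_0, V_0$ and residual $T_0 := V(G)\setminus V(G_0)$ satisfying $|T_0| \leq 12h\varepsilon n$; $|U_0|, |V_0| \in [(\tfrac{1}{2}-\tfrac{1}{11h})n,(\tfrac{1}{2}+\tfrac{1}{11h})n]$; $\deg_{G_0}(v) \geq (\tfrac{1}{2}-\tfrac{1}{11h})n$ for every $v \in U_0 \cup V_0$; and every vertex of $G_0$ has at most $h$ neighbors in its own partite set (in $G$). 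Combining $e(G_0) \geq n^2/4 - 13h\varepsilon n^2$ with $e(G) \leq n^2/4$ yields the useful consequence
\[
e(G[U_0]) + e(G[V_0]) + e_G(T_0, V(G_0)) + e(G[T_0]) \leq 14h\varepsilon n^2.
\]

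Next I refine $(U_0,V_0,T_0)$ in two phases. Phase~I reassigns $T_0$: each $x \in T_0$ with $\deg_G(x, V_0) \geq (\tfrac{1}{2}-\tfrac{1}{10h})n$ and $\deg_G(x, U_0) \leq h$ is placed in $U$, symmetrically into $V$, and otherwise kept in $T$. Phase~II cleans the bipartite part: first remove to $T$ every vertex of $U_0$ (resp.\ $V_0$) that has a neighbor in $U_0$ (resp.\ $V_0$), making $G[U \cup V]$ induced bipartite; then, for each $x \in T$ with $1 \leq |N_G(x) \cap U| \leq h$ (or analogously for $V$), move those $\leq h$ neighbors of $x$ from $U$ into $T$, and iterate. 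Because own-part degrees are capped by $h$ from the outset, this iteration can enlarge $T$ by at most a factor of $h+1$ per round, and only $O(1)$ rounds are needed before condition~(iii) holds for every $x \in T$. Granting the desired bound $|T|\leq 30h^2\varepsilon n$, condition~(i) follows since the slack $\tfrac{1}{10h}-\tfrac{1}{11h}=\tfrac{1}{110h^2}$ in the size-thresholds absorbs the loss $|T\setminus T_0|$ from $U_0\cup V_0$; the minimum degree in~(ii) is preserved by the same slack; and the edge count in~(ii) follows from subtracting at most $(|T|-|T_0|)\cdot n$ edges from $e(G_0)$.

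The principal obstacle is the sharp bound $|T| \leq 30h^2 \varepsilon n$: naive edge-counting only gives $|A|+|B| \leq 2(e(G[U_0])+e(G[V_0])) = O(h\varepsilon n^2)$, which is weaker than required by a factor of $n/h$. To close the gap I would exploit the maximality of $G$: for every intra-part edge $uu' \in E(G[U_0])$, $F_{s,k}$-freeness implies that $u$ and $u'$ admit at most $s-1$ internally vertex-disjoint $u$-$u'$ paths of length $2k$ in $G-uu'$, a severe restriction given that both endpoints have $\approx n/2$ neighbors in $V_0$. Carefully counting such constrained pairs via a path-counting argument across the $V_0$-side, together with the symmetric use of maximality to bound the number of $T_0$-vertices reassigned in Phase~I, is the technical heart of the proof.
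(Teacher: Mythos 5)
Your setup is right: you invoke Lemma \ref{lem-3-1} together with Theorem \ref{thm-3-2} exactly as the paper does, and you correctly identify the crux, namely that naive edge-counting only bounds the number of vertices incident to intra-part edges by $O(h\varepsilon n^2)$ rather than the required $O(h^2\varepsilon n)$. But you leave that crux unresolved: your final paragraph only gestures at ``a path-counting argument across the $V_0$-side'' as the ``technical heart of the proof'' without carrying it out, and your intermediate bookkeeping (``at most a factor of $h+1$ per round, and only $O(1)$ rounds are needed'') is asserted, not proved --- without further input, the cascading removal in your Phase~II could in principle run for many rounds and expel far more than $30h^2\varepsilon n$ vertices.

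The missing idea is that there is nothing to count: the minimum-degree condition from Lemma \ref{lem-3-1} already forces $U_0$ and $V_0$ to be independent sets in $G$. Indeed, if $u_1u_2$ were an edge inside $U_0$, then $u_1,u_2$ would have at least $\left(\frac{1}{2}-\frac{5}{11h}\right)n$ common neighbors $V_0'$ in $V_0$, and $G[U_0\setminus\{u_1,u_2\},V_0']$ would have more than $\frac{(h+s-3)n}{2}$ edges; the Erd\H{o}s--Gallai theorem then yields a path on $h+s-1$ vertices there, which can be truncated into $s$ vertex-disjoint paths of length $2k-2$ with endpoints in $V_0'$, and these together with $u_1,u_2$ form a copy of $F_{s,k}$ --- a contradiction. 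This is exactly the ``severe restriction'' you allude to, pushed to its conclusion: the restriction is violated outright, so no intra-part edge exists, and no appeal to maximality is needed here. Once $U_0,V_0$ are independent, your Phase~I is unnecessary (one keeps $T=T_0$), and the cleanup for condition~(iii) terminates after at most $|T_0|$ steps, because every vertex moved from $U$ into $T$ is itself isolated from $U$ and never triggers a further step; this gives $|U'|,|V'|\le h|T_0|\le 12h^2\varepsilon n$ and hence $|T|\le 30h^2\varepsilon n$, from which (i), (ii), (iii) follow as you outline. As written, your proposal does not establish the bound on $|T|$, so the proof is incomplete.
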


\begin{proof}
Since $F_{s,k}$ is 3-color-critical, by Theorem \ref{thm-3-2} we have $ ex(n,F_{s,k})=\lfloor\frac{n^2}{4}\rfloor$. Since $F_{s,k}$ has two critical vertices, by Lemma \ref{lem-3-1} there is a bipartite subgraph $G'$ of $G$ with at least $(1-12h\varepsilon)n$ vertices, at least $\frac{n^2}{4}-13h\varepsilon n^2$ edges and minimum degree at least $\left(\frac{1}{2}-\frac{1}{11h}\right)n$. Let $U_0,V_0$ be two partite sets of $G'$ and let $T_0=V(G)\setminus V(G')$. Clearly, $\left(\frac{1}{2}-\frac{1}{11h}\right)n\leq |U_0|,|V_0|\leq \left(\frac{1}{2}+\frac{1}{11h}\right)n$ and $|T_0|\leq 12h\varepsilon n$.

\begin{claim}
Both $U_0$ and $V_0$ are independent sets in $G$, that is, $G'$ is induced.
\end{claim}
\begin{proof}
 By contradiction, we may assume, without loss of generality, that $U_0$ is not an independent set. Then there is an edge $u_1u_2$ in $G[U_0]$. Since $\delta(G')\geq \left(\frac{1}{2}-\frac{1}{11h}\right)n$ and $|V_0|\leq \left(\frac{1}{2}+\frac{1}{11h}\right)n$, each $u_i (i=1,2$) has at most $\frac{2n}{11h}$ non-neighbors in $V_0$. It follows that $u_1,u_2$ have at least $\left(\frac{1}{2}-\frac{5}{11h}\right)n$ common neighbors in $V_0$. Let $V_0'$ be the set of the common neighbors of $u_1, u_2$ and $U_0'=U_0\setminus \{u_1,u_2\}$. By $\delta(G')\geq \left(\frac{1}{2}-\frac{1}{11h}\right)n$ and since $n$ is sufficiently large, we have
\[
e(U_0',V_0') \geq |V_0'| \left(\left(\frac{1}{2}-\frac{1}{11h}\right)n-2\right)> \left(\frac{1}{2}-\frac{5}{11h}\right)n\left(\frac{1}{2}-\frac{2}{11h}\right)n> \frac{n^2}{6}\geq\frac{(h+s-3)n}{2}.
\]
By Erd\H{o}s-Gallai theorem \cite{erdos-gallai}, there is a path $P$ on $h+s-1$ vertices in $G[U_0',V_0']$. We truncate $P$ into $s$ vertex-disjoint paths with endpoints in $V_0'$ and each of length $2k-2$. These paths together with $u_1,u_2$ form a copy of  $F_{s,k}$, a contradiction.
\end{proof}

Let $T=T_0$, $U=U_0$ and $V=V_0$. Now we remove a small amount of vertices from $U$ to $T$ by a greedy algorithm.
In each step, if there is a vertex $x\in T$ with $1\leq \deg(x,U)\leq h$, then we remove all the neighbors of $x$  from $U$ to $T$. If every vertex in $T$ either has at least $h+1$ neighbors or  no neighbors in $U$, then we stop.  By Claim 3, $U_0$ is an independent set, then each vertex added in $T$ has no neighbors in $U$. Moveover, if  all the neighbors of $x\in T_0$ have been  removed from $U$ to $T$, then $x$ has no neighbors in the updated $U$. Hence the algorithm will stop in at most $|T_0|$ steps.
Let $U'$ be the vertices removed from $U$ to $T$ by the algorithm. It follows that
\[
|U'|\leq h|T_0| \leq 12h^2\varepsilon n.
\]

Then  we remove a small amount of  vertices from $V$ to $T$ similarly. In each step, if there is a vertex $x\in T$ with $1\leq \deg(x,V)\leq h$, then we remove all the neighbors of $x$  from $V$ to $T$. Similarly, the algorithm will stop in at most $|T_0|+|U'|$ steps. Since  $\delta(G')\geq \left(\frac{1}{2}-\frac{1}{11h}\right)n$,  each $x\in U'$ has at least $\left(\frac{1}{2}-\frac{1}{11h}\right)n$ neighbors in $V_0$. It follows that each $x\in U'$ has at least $\left(\frac{1}{2}-\frac{1}{11h}\right)n-(|T_0|+|U'|)h\geq h+1$ neighbors in $V$ in the executing of the algorithm. That is, the neighbors of vertices in $U'$ will not be removed in the algorithm. Hence, the algorithm will stop in at most $|T_0|$ steps. Let $V'$ be the vertices removed from $V$ to $T$ by the algorithm. It follows that
\[
|V'|\leq h|T_0| \leq 12h^2\varepsilon n.
\]

Let $U,V,T$ be the resulting sets at the end of the algorithm. By Claim 3 and since $U\subset U_0, V\subset V_0$, both $U$ and $V$ are independent sets. Let $G''$ be the bipartite subgraph induced by $U$ and $V$. Since both $|U'|$ and $|V'|$ have size at most $12h^2\varepsilon n$, we have
\[
|T|\leq |T_0|+|U'|+|V'|\leq 12h\varepsilon n+24h^2\varepsilon n\leq 30h^2\varepsilon n,
\]
and
\begin{align*}
e(G'') &\geq e(G')-(|U'|+|V'|)\cdot \max\{|U_0|,|V_0|\} \\
&\geq \frac{n^2}{4}-13h\varepsilon n^2 - 24h^2\varepsilon n \left(\frac{1}{2}+\frac{1}{11h}\right)n\\
&\geq \frac{n^2}{4}-25h^2\varepsilon n^2,
\end{align*}
and
\[
\delta(G'') \geq \delta(G')- \max\{|U'|,|V'|\} \geq \left(\frac{1}{2}-\frac{1}{11h}\right)n -12h^2\varepsilon n \geq \left(\frac{1}{2}-\frac{1}{10h}\right)n.
\]
It follows that
\[
\left(\frac{1}{2}-\frac{1}{10h}\right)n\leq |U|,|V|\leq \left(\frac{1}{2}+\frac{1}{10h}\right)n.
\]
Moreover, for each $x\in T$, $x$ either has at least $h+1$ neighbors or  no neighbors in $U$, and $x$ either has at least $h+1$ neighbors or no neighbors in $V$. Thus the lemma holds.
\end{proof}

\begin{lem}\label{lem-3-4}
 Let $G$ be a bipartite graph with partite sets $U,V$ and let $W$ be a subset of $U\cup V$ with $|W|=h$. If $\left(\frac{1}{2}-\frac{1}{10h}\right)n\leq |U|,|V|\leq \left(\frac{1}{2}+\frac{1}{10h}\right)n$, $\delta(G)\geq\left(\frac{1}{2}-\frac{1}{10h}\right)n$ and  $n\geq 10h$, then the following holds.
 \begin{itemize}
  \item[(i)]  For every $u\in U, v\in V$ and every odd integer $l$ with $3\leq l\leq h$, there is a $uv$-path $P$ of length $l$ such that $(V(P)\setminus\{u,v\})\cap W=\emptyset$.
  \item[(ii)] For every $u,v\in U$ and every even integer $l$ with $2\leq l\leq h$, there is a $uv$-path $P$ of length $l$ such that $(V(P)\setminus\{u,v\})\cap W=\emptyset$.
 \end{itemize}
\end{lem}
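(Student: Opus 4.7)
The plan is to construct the path $P=w_0w_1\cdots w_l$, with $w_0=u$ and $w_l=v$, by selecting the internal vertices greedily one at a time. For $i=1,2,\ldots,l-2$ I would pick $w_i\in N(w_{i-1})$ avoiding $W\cup\{u,v,w_1,\ldots,w_{i-1}\}$, and finally pick $w_{l-1}$ as a common neighbor of $w_{l-2}$ and $v$ avoiding $W\cup\{u,w_1,\ldots,w_{l-2}\}$. The entire argument rests on two quantitative inputs. First, every vertex has degree at least $(\tfrac{1}{2}-\tfrac{1}{10h})n\geq 5h-1$. Second, whenever two vertices $x,y$ lie on the same side of the bipartition, inclusion-exclusion against the size of the opposite part yields
\[
|N(x)\cap N(y)|\;\geq\; 2\left(\tfrac{1}{2}-\tfrac{1}{10h}\right)n-\left(\tfrac{1}{2}+\tfrac{1}{10h}\right)n\;=\;\left(\tfrac{1}{2}-\tfrac{3}{10h}\right)n\;\geq\; 5h-3.
\]

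The key structural observation is a parity check: in both (i) and (ii), the intermediate vertex $w_{l-2}$ and the endpoint $v$ end up on the same side of the bipartition, so the common-neighbor bound above is exactly what is needed to carry out the closing step. Indeed, in (i), $l$ is odd and vertices alternate starting from $u\in U$, which places $w_{l-2}\in V$ together with $v\in V$; in (ii), $l$ is even and both $w_{l-2}$ and $v$ lie in $U$. For the very short case $l=2$ of (ii), the construction degenerates to picking a common neighbor of $u$ and $v$ directly, which falls under the same inequality.

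The verification that the greedy choices succeed then reduces to a routine count. At each intermediate step $i\leq l-2$, the forbidden set $W\cup\{u,v,w_1,\ldots,w_{i-1}\}$ has size at most $h+l\leq 2h$, which is strictly less than the degree lower bound $5h-1$. At the closing step, the forbidden set has size at most $h+(l-1)\leq 2h-1$, while the common-neighbor lower bound $5h-3$ exceeds this (since $h\geq 2$). Each step therefore has a valid choice and the path is built.

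I do not foresee any substantive obstacle: the lemma is essentially a direct application of the minimum-degree and balance hypotheses, and the only subtlety worth highlighting is the parity bookkeeping that guarantees $w_{l-2}$ and $v$ lie on the same side of the bipartition so that the same-side common-neighbor inequality is applicable at the final edge. All other inequalities follow immediately from $n\geq 10h$ and $l\leq h$, with substantial slack.
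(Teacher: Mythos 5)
Your greedy construction is correct, but it is a genuinely different route from the paper's proof. The paper does not build the path vertex by vertex; instead, for (i) it sets $A=N(v)\setminus(W\cup\{u\})$ and $B=N(u)\setminus(W\cup\{v\})$, shows $G[A,B]$ has average degree larger than $h$, and invokes the Erd\H{o}s--Gallai theorem to extract a path of length $l-2$ inside $G[A,B]$, which is then closed up through $u$ and $v$; for (ii) it does the same with $A=U\setminus(W\cup\{u,v\})$ and $B=N(u)\cap N(v)\setminus W$, with a short case analysis on which side the endpoints of the Erd\H{o}s--Gallai path land. Your argument replaces this with an elementary greedy selection, and the two quantitative inputs you isolate (minimum degree at least $(\tfrac12-\tfrac1{10h})n\geq 5h-1$, and the same-side common-neighbour bound $(\tfrac12-\tfrac3{10h})n\geq 5h-3$) are exactly the inequalities the paper also relies on, just deployed pointwise rather than in aggregate. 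Your parity bookkeeping is right: in (i) the alternation from $u\in U$ puts $w_{l-2}$ and $v$ both in $V$ precisely because $l$ is odd, and in (ii) both in $U$ because $l$ is even, so the closing step is always a same-side common-neighbour choice; the forbidden sets never exceed $2h$ vertices, which is comfortably below both bounds. The degenerate case $l=2$ of (ii) is handled correctly. What your approach buys is the elimination of the external Erd\H{o}s--Gallai input and of the endpoint case analysis in (ii); what the paper's approach buys is stylistic uniformity with the other places in the paper where Erd\H{o}s--Gallai is used. Both proofs are valid.
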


\begin{proof}
For any $u\in U, v\in V$, let $A=N(v)\setminus (W\cup \{u\})$  and $B=N(u)\setminus (W\cup \{v\})$. Then
\[
\left(\frac{1}{2}-\frac{1}{10h}\right)n-h-1\leq |A|,|B|\leq \left(\frac{1}{2}+\frac{1}{10h}\right)n
\]
and the minimum degree of $G[A,B]$ is at least
\begin{align*}
\delta(G)-\max\left\{|U|-|A|,|V|-|B|\right\}
&\geq \left(\frac{1}{2}-\frac{1}{10h}\right)n- \left(\frac{n}{5h}+h+1\right)\\
&\geq\left(\frac{1}{2}-\frac{3}{10h}\right)n-h-1.
\end{align*}
It follows that
\begin{align*}
e(A,B)=\frac{1}{2}& \sum_{x\in A\cup B} \deg_{G[A,B]}(x)\\
&\geq \frac{1}{2}\left(\left(\frac{1}{2}-\frac{3}{10h}\right)n-h-1\right)(|A|+|B|)\\
&\geq \frac{1}{2}\left(\left(\frac{1}{2}-\frac{3}{10h}\right)10h-h-1\right)(|A|+|B|)\\
&> \frac{h}{2}(|A|+|B|)\\
&> \frac{(l-2)-1}{2}(|A|+|B|).
\end{align*}
For any odd integer $l$ with $3\leq l\leq h$, there is a path of length $l-2$ in $G[A,B]$ by Erd\H{o}s-Gallai Theorem \cite{erdos-gallai}, which together with $u,v$ is our desired path.

If $u,v\in U$, then let $A=U\setminus (W\cup\{u,v\})$ and $B=N(u)\cap N(v)\setminus W$. Clearly,
\[
|A|\geq \left(\frac{1}{2}-\frac{1}{10h}\right)n-h-2
\]
and
\begin{align*}
|B| &\geq |N(u)\cap N(v)|-h\\
&\geq |N(u)|+| N(v)|-|V|-h\\
&\geq 2\left(\frac{1}{2}-\frac{1}{10h}\right)n -\left(\frac{1}{2}+\frac{1}{10h}\right)n -h\\
&= \left(\frac{1}{2}-\frac{3}{10h}\right)n -h.
\end{align*}
The minimum degree of $G[A,B]$ is at least
\begin{align*}
\delta(G)-\max\left\{|U|-|A|,|V|-|B|\right\}
&\geq \left(\frac{1}{2}-\frac{1}{10h}\right)n- \left(\frac{2n}{5h}+h\right)\\
&\geq\left(\frac{1}{2}-\frac{1}{2h}\right)n-h.
\end{align*}
It follows that
\begin{align*}
e(A,B)=\frac{1}{2}& \sum_{x\in A\cup B} \deg_{G[A,B]}(x)\\
&\geq \frac{1}{2}\left(\left(\frac{1}{2}-\frac{1}{2h}\right)n-h\right)(|A|+|B|)\\
&\geq \frac{1}{2}\left(\left(\frac{1}{2}-\frac{1}{2h}\right)10h-h\right)(|A|+|B|)\\
&> \frac{h}{2}(|A|+|B|)\\
&> \frac{l-1}{2}(|A|+|B|).
\end{align*}
For any even integer $l$ with $2\leq l\leq h$, there is a path of length $l$ in $G[A,B]$ by Erd\H{o}s-Gallai Theorem \cite{erdos-gallai}, say $x_1x_2\ldots x_{l+1}$. If $x_1,x_{l+1}\in A$, then $ux_2\ldots x_lv$ is the desired path. If $x_1,x_{l+1}\in B$, then $ux_3\ldots x_{l+1}v$ is the desired path. This completes the proof.
\end{proof}

We need some definitions in the proof of Lemma \ref{thm-main}. Let $F,G$ be two graphs. A {\it homomorphism} from $F$ to $G$ is a mapping $\phi: V(F)\rightarrow V(G)$ with the property that $\{\phi(u),\phi(v)\}\in E(G)$ whenever $\{u,v\}\in E(F)$. A homomorphism from $F$ to $G$ is also called an {\it $F$-homomorphism} in $G$. If $\phi$ is injective, then $\phi$ is called an {\it injective homomorphism}. If  $\phi$ is both injective and surjective, then $\phi$ is called an {\it isomorphism}. Now we prove Lemma \ref{thm-main} by a delicate vertex-deletion process.

\begin{proof}[Proof of Lemma \ref{thm-main}.]
Let $G$ be an $n$-vertex maximal $F_{s,k}$-free graph with at least $\frac{n^{2}}{4} - \varepsilon n^2$ edges and let $h=|V(F_{s,k})|$. By Lemma \ref{lem-3-3}, there is a partition $(U, V, T)$ of $V(G)$ satisfying conditions (i), (ii) and (iii) of Lemma \ref{lem-3-3}. Let $G'=G[U,V]$.  We are left to delete vertices in $G'$ until the resulting graph is complete bipartite.

We write $F$ instead of $F_{s,k}$ for simplicity.   
For any  non-edge $xy$ of $G'$ with $x\in U$ and $y\in V$, $G+xy$ contains at least one copy of $F$ since $G$ is maximal $F$-free. Let $F_{xy}$ be one of such copies and let $\phi_{xy}$ be the isomorphism from $F$ to $F_{xy}$. Let
\[
\Omega = \{xy\colon x\in U,\ y\in V\mbox{ and } xy\notin E(G')\}.
\]
\begin{claim}\label{claim-4}
For each $xy\in \Omega$, $N_{F_{xy}}(x)\cap T\neq \emptyset$ and $N_{F_{xy}}(y)\cap T\neq \emptyset$.
\end{claim}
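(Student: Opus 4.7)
My plan is to prove each half of the claim by contradiction; by the symmetric roles of $x\in U$ and $y\in V$ it suffices to argue $N_{F_{xy}}(x)\cap T\neq\emptyset$. Suppose for contradiction that every $F_{xy}$-neighbour of $x$ lies outside $T$. Since $x\in U$ and $U$ is independent in $G$ by Lemma~\ref{lem-3-3}(ii), those neighbours must lie in $V$; write $N:=N_{F_{xy}}(x)\subseteq V$ and note that $y\in N$ and $|N|\leq \deg_{F}(\phi_{xy}^{-1}(x))\leq s+1\leq h$.

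The strategy is to exhibit a copy of $F$ inside $G$ itself by replacing $x$ with a suitable vertex $x'\in U\setminus V(F_{xy})$ that is adjacent in $G$ to every vertex of $N$. Given such an $x'$, modify the isomorphism $\phi_{xy}$ by sending $\phi_{xy}^{-1}(x)$ to $x'$ while keeping every other assignment. This produces an injective homomorphism $F\to G$: edges of $F$ not incident to $\phi_{xy}^{-1}(x)$ correspond to edges of $F_{xy}$ that avoid the added edge $xy$, and hence already lie in $G$; edges incident to $\phi_{xy}^{-1}(x)$ correspond to pairs $x'w$ with $w\in N$, which lie in $G$ by the choice of $x'$. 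The resulting copy of $F$ in $G$ contradicts the $F$-freeness of $G$.

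To locate such an $x'$ I will use the minimum degree bound in Lemma~\ref{lem-3-3}(ii). Since $V$ is independent in $G$, each $u\in N\subseteq V$ satisfies $\deg_G(u,U)=\deg_{G'}(u)\geq (1/2-1/(10h))n$, so $u$ has at most $|U|-(1/2-1/(10h))n\leq n/(5h)$ non-neighbours in $U$. A union bound over $|N|\leq h$ yields at least $|U|-h\cdot n/(5h)\geq (3/10-1/(10h))n$ vertices of $U$ adjacent in $G$ to every vertex of $N$; discarding the at most $h$ vertices of $V(F_{xy})$ still leaves many admissible candidates when $n$ is large. The claim about $y$ follows by the same argument with the roles of $U$ and $V$ interchanged.

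The only conceptual subtlety to verify is that the rewiring from $x$ to $x'$ does not smuggle the forbidden edge $xy$ back in. This is automatic: $xy$ is the only edge of $F_{xy}$ that is not in $G$, it is incident to $x$, and it is discarded the instant $x$ is removed from $F_{xy}$. Beyond that single observation, the proof is a short degree count layered on top of Lemma~\ref{lem-3-3}; the heavy structural work has already been done by that lemma, which is why I do not expect to need the path-building statement of Lemma~\ref{lem-3-4} for this particular claim.
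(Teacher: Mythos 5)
Your proposal is correct and follows essentially the same route as the paper: argue by contradiction, note that all $F_{xy}$-neighbours of $x$ lie in $V$ because $U$ is independent, use the minimum-degree bound of Lemma~\ref{lem-3-3}(ii) to find a common $G$-neighbour $x'\in U\setminus V(F_{xy})$ of those at most $s+1$ vertices, and replace $x$ by $x'$ to obtain a copy of $F$ in $G$. The only difference is cosmetic (you union-bound over $|N|\leq h$ rather than $p+1\leq s+1$), and your observation that the missing edge $xy$ is discarded with $x$ is exactly the point the paper leaves implicit.
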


\begin{proof}
By contradiction, we may suppose that $N_{F_{xy}}(x)\cap T=\emptyset$ without loss of generality. Let $y_0=y,y_1,\ldots,y_p$ be the neighbors of $x$ in $F_{xy}$. Then $y_1,\ldots,y_p$ are all in $V$ because $U$ is an independent set. Since the maximum degree of $F_{xy}$ is  $s+1$, it follows that $p\leq s$. By Lemma \ref{lem-3-3} (ii), we have  $\delta(G')\geq \left(\frac{1}{2}-\frac{1}{10h}\right)n$ and $\left(\frac{1}{2}-\frac{1}{10h}\right)n\leq |U|\leq \left(\frac{1}{2}+\frac{1}{10h}\right)n$, then each $y_i$ ($i=0,1,\ldots,p$) has at most $\frac{n}{5h}$ non-neighbors in $U$. Note that
 \[
h= s(2k-1)+2\geq 3s+2 > 2s+3\geq 2p+3
\]
as $k\geq 2$ and $s\geq p$.
Therefore, the number of common neighbors of  $y_0,y_1,\ldots,y_p$ in $U$ is at least
\begin{align*}
\left(\frac{1}{2}-\frac{1}{10h}\right)n-(p+1)\frac{n}{5h}&\geq \left(\frac{1}{2}-\frac{2p+3}{10h}\right)n\\
&>  \left(\frac{1}{2}-\frac{1}{10}\right)n\\
&\geq \frac{2n}{5}\\
& >h.
\end{align*}
Thus, there is a vertex $x'\in U$ such that $x'\notin V(F_{xy})$ and $x'y_i\in E(G)$ for $i=0,1,\ldots,p$. Then by replacing $x$ with $x'$ in $F_{xy}$ we obtain a copy of $F$ in $G$, contradicting the fact that $G$ is $F$-free.
\end{proof}

Let $a,b$ be the vertices of degree $s+1$ in $F$ and let $ac_1^i \ldots c_{2k-1}^i b$ ($i=1,\ldots,s$) be those paths in $F-ab$. Now we partition $\Omega$ into three classes as follows:
\begin{equation*}
\left\{
\begin{array}{l}\Omega_1=\left\{xy\in \Omega\colon \phi_{xy}^{-1}(x),\phi_{xy}^{-1}(y)\in V(F)\setminus\{a,b\}\right\},\\[6pt]
\Omega_2=\left\{xy\in \Omega\colon  \{\phi_{xy}^{-1}(x),\phi_{xy}^{-1}(y)\}=\{a,b\}\right\},\\[6pt]
\Omega_3=\left\{xy\in \Omega\colon |\{\phi_{xy}^{-1}(x),\phi_{xy}^{-1}(y)\}\cap\{a,b\}|=1 \right\}.
\end{array}\right.
\end{equation*}

We delete a small amount of vertices from $U\cup V$ to destroy all non-edges in $\Omega$ in the following three steps.

{\noindent\bf Step 1.} We can find $U_1\subset U$ and $V_1\subset V$ such that $|U\setminus U_1|+|V\setminus V_1| \leq 160 h^3\varepsilon n^{\frac{3}{2}}$ and $\bar{E}[U_1,V_1] \cap \Omega_1=\emptyset$. That is, by deleting at most $160 h^3\varepsilon n^{\frac{3}{2}}$ vertices from $U\cup V$ we destroy all non-edges in $\Omega_1$.

\begin{proof}
If  $\Omega_1=\emptyset$, we have nothing to do. So assume that $\Omega_1\neq\emptyset$, then  there is a non-edge $xy$ in $\Omega_1$ with $x\in U$ and $y\in V$. By definition of $\Omega_1$, we see that both $x$ and $y$  have degree two in $F_{xy}$. By Claim \ref{claim-4},  $N_{F_{xy}}(x)\cap T\neq \emptyset$ and $N_{F_{xy}}(y)\cap T\neq \emptyset$. Let $x^*\in N_{F_{xy}}(x)\cap T$ and $y^*\in N_{F_{xy}}(y)\cap T$.  Then  $x^*\neq y^*$ since $F_{xy}$ is triangle-free.  Let $X=N_G(x^*,U)$, $Y=N_G(y^*,V)$ and let  $S$ be one of $X$ and $Y$ with smaller size.

For any edge $x'y'$ in $G$ with $x'\in X$ and $y'\in Y$, if $\{x',y'\}\cap V(F_{xy})=\emptyset$, then by replacing $x,y$ with $x',y'$ in $F_{xy}$ we obtain a copy of $F$ in $G$, a contradiction. Thus,
every edge in $G[X,Y]$ intersects $V(F_{xy})$, implying that $e(X,Y)\leq h(|X|+|Y|)$. Then
\[
e_{\bar{G}}(X,Y) = |X||Y| - e(X,Y) \geq |X||Y|-h(|X|+|Y|).
\]
Without loss of generality, we assume that $|X|\leq |Y|$, then $S = X$. If $|S|\geq 4h$, then
\begin{align*}
e_{\bar{G}}(X,Y)& \geq |S||Y|-h(|S|+|Y|)\\
& = |Y|(|S|-h)-h|S|\\
& \geq |S|^2-2h|S|\\
& \geq \frac{|S|^2}{2}>\frac{|S|^2}{16h^2}.
\end{align*}
If $|S|< 4h$, then since $xy$ is a non-edge of $G$ between $X$ and $Y$, we have
\[
e_{\bar{G}}(X,Y) \geq 1 >  \frac{|S|^2}{16h^2}.
\]
Thus, there are at least $\frac{|S|^2}{16h^2}$ non-edges between $X$ and $Y$. We delete vertices in $S$ from $U\cup V$ and let $U'=U\setminus S$ and $V'=V\setminus S$. If $\bar{E}[U',V'] \cap \Omega_1=\emptyset$, then we are done. Otherwise, there is another non-edge $xy$ in $\Omega_1$ with $x\in U', y\in V'$, and we delete another $S'$ from $U'\cup V'$ incidents with at least $\frac{|S'|^2}{16h^2}$ non-edges between $U'$ and $V'$. By deleting vertices greedily, we shall obtain a sequence of disjoint  sets $S_1,S_2,\ldots,S_l$ in $U\cup V$ such that $\bar{E}[U\setminus (S_1\cup \ldots \cup S_l),V\setminus (S_1\cup \ldots \cup S_l)] \cap \Omega_1=\emptyset$. In each step of the greedy algorithm, there is a $u\in T$ such that either $N(u)\cap U$ or $N(u)\cap V$ is deleted, implying that $l\leq 2|T|$.

By Lemma \ref{lem-3-3} (ii), $G[U,V]$ has at least $\frac{n^2}{4}-25h^2\varepsilon n^2$ edges. It follows that the number of non-edges between $U$ and $V$ is at most
\[
|U||V|-\left(\frac{n^2}{4}-25h^2\varepsilon n^2\right) \leq 25h^2\varepsilon n^2.
\]
Thus,
\begin{align}\label{ineq3-3}
\sum_{i=1}^l \frac{|S_i|^2}{16h^2}\leq 25h^2\varepsilon n^2.
\end{align}
By Cauchy-Schwarz inequality, we have
\begin{align}\label{ineq3-4}
\left(\sum_{i=1}^{l} |S_i|\right)^2 \leq \left(\sum_{i=1}^{l} |S_i|^2\right)l.
\end{align}
Note that  $|T|\leq 30h^2\varepsilon n$ from Lemma \ref{lem-3-3} (i).
By \eqref{ineq3-3}, \eqref{ineq3-4}  and $l\leq 2|T|$, we arrive at
\[
\left(\sum_{i=1}^{l} |S_i|\right)^2  \leq 16h^2 \cdot 25h^2\varepsilon n^2 l\leq  20^2 h^4\varepsilon n^2 \cdot 2|T| \leq  20^2 h^4\varepsilon n^2 \cdot 60h^2\varepsilon n.
\]
Let $U_1=U\setminus (S_1\cup \ldots \cup S_l)$ and $V_1=V\setminus (S_1\cup \ldots \cup S_l)$. Then
\[
|U\setminus U_1|+|V\setminus V_1| \leq \sum_{i=1}^{l} |S_i| \leq 160 h^3\varepsilon n^{\frac{3}{2}}
\]
and  Step 1 is finished.
\end{proof}

{\noindent \bf Step 2.}
We can find $U_2\subset U_1$ and $V_2\subset V_1$ such that $|U_1\setminus U_2|+|V_1\setminus V_2| \leq (6h)^{s+3}\varepsilon^{\frac{s+1}{2}} n^{\frac{s+2}{2}}$ and $\bar{E}[U_2,V_2] \cap (\Omega_1\cup\Omega_2)=\emptyset$. That is,  by deleting at most $(6h)^{s+2}\varepsilon^{\frac{s+1}{2}} n^{\frac{s+2}{2}}$ vertices from $U_1\cup V_1$ we destroy all non-edges in $\Omega_2$.

\begin{proof}
By Step 1, we see that $E[U_1,V_1] \cap \Omega_1=\emptyset$. Thus, we are left to delete vertices from $U_1\cup V_1$ to destroy all  non-edges in $\Omega_2\cap \bar{E}[U_1,V_1]$. If $\Omega_2\cap \bar{E}[U_1,V_1]=\emptyset$, we have nothing to do. So assume that $\Omega_2\cap \bar{E}[U_1,V_1]\neq\emptyset$, then  there is a non-edge $xy$ in $\Omega_2$ with $x\in U_1$ and $y\in V_1$. For $xy\in \Omega_2$, let
\[
\mathcal{F}_{xy}=\left\{F_{xy}\colon F_{xy} \mbox{ is a copy of } F \mbox{ in } G+xy \mbox{ with }\deg_{F_{xy}}(x)= \deg_{F_{xy}}(y)=s+1\right\}.
\]
Clearly, $\mathcal{F}_{xy}\neq \emptyset$.
\begin{claim}
There is an $F_{xy}\in \mathcal{F}_{xy}$ such that
\begin{itemize}
  \item[(i)] for every $u\in V(F_{xy})\setminus \left(T\cup \{x,y\}\right)$, $\deg_{F_{xy}}(u,T)\leq 1$;
  \item[(ii)] for every $uv\in E(F_{xy}-T- \{x,y\})$,  $\deg_{F_{xy}}(u,T)+\deg_{F_{xy}}(v,T)\leq 1$.
\end{itemize}
\end{claim}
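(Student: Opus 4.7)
The plan is to choose $F_{xy}\in\mathcal{F}_{xy}$ minimizing $|V(F_{xy})\cap T|$ (with ties broken arbitrarily), and to show that this $F_{xy}$ satisfies both (i) and (ii), by deriving a contradiction to minimality if either fails. Suppose first that (i) fails at some $c_j\in(U\cup V)\setminus\{x,y\}$ lying on a path $P_i$ of $F_{xy}$, with $t_1:=c_{j-1}\in T$ and $t_2:=c_{j+1}\in T$; since $x,y\notin T$, this forces $2\le j\le 2k-2$. We may assume $c_j\in U$, and then Lemma~\ref{lem-3-3}(iii) gives $|N_G(t_r)\cap U|\ge h+1$ for $r=1,2$. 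The modification to perform depends on the location of $c_{j-2}$ and $c_{j+2}$.

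If one of $c_{j\pm 2}$, say $c_{j+2}$, lies in $U$, then $c_j$ and $c_{j+2}$ share more than $h$ common neighbors in $V$ via $G'$ (since $2\delta(G')-|V|>h$), so I pick such a common neighbor $w\in V\setminus V(F_{xy})$ and replace $t_2$ by $w$ in $P_i$: this yields a valid $F_{xy}'\in\mathcal{F}_{xy}$ with one fewer $T$-vertex. If instead $c_{j-2},c_{j+2}\in V$, then $t_1$ has a $V$-neighbor (namely $c_{j-2}$) too, hence at least $h+1$ neighbors in each of $U$ and $V$; I then rebuild $P_i$ entirely as a new $x$-$y$ path of length $2k$ using $t_1$ as its unique $T$-vertex, constructing the two $G'$-subpaths via Lemma~\ref{lem-3-4} (applied with forbidden set $V(F_{xy})\setminus V(P_i)$ of size $\le h$ and lengths in the permissible range since $h\ge 2k$). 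The new $F_{xy}'$ loses $t_2$, again contradicting minimality. In the remaining case where $c_{j-2}$ or $c_{j+2}$ lies in $T$, I extend the analysis outward along $P_i$ to the first non-$T$ vertex on each side and apply the same dichotomy to those anchor vertices; in every configuration one obtains an $F_{xy}'$ with strictly fewer $T$-vertices.

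For condition (ii), the proof is analogous: starting from a bad pattern $t'-u-v-t''$ with $t',t''\in T$ and $u,v\in(U\cup V)\setminus\{x,y\}$, one again applies either a short swap or a full path rebuild depending on the parts of the local anchors. The main obstacle is the extended $T$-block case, where one must track the walk parity along $P_i$ (essentially the parity of the number of $U$-$V$ crossings up to each position) to ensure that the anchor vertices of the replaced segment have parities compatible with the segment length, so that Lemma~\ref{lem-3-4} is applicable. This is handled by a careful local analysis showing that in every configuration, either the short swap or the full path rebuild succeeds and strictly reduces $|V(F_{xy})\cap T|$.
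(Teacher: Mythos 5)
Your overall strategy (pick an extremal member of $\mathcal{F}_{xy}$ and derive a contradiction by rerouting part of a cycle through $G'$ via Lemmas \ref{lem-3-3}(iii) and \ref{lem-3-4}) is the same as the paper's, and your potential function $|V(F_{xy})\cap T|$ would serve as well as the paper's count of violations. The problem is that your surgery operations are too local, and the case you defer ("extend the analysis outward... in every configuration one obtains an $F_{xy}'$ with strictly fewer $T$-vertices") is exactly where the argument can fail. Consider $k=5$ and the path $P_i = y\,e\,t_1t_2\,u_1\,t_3t_4\,f\,t_5t_6\,x$ with $e,u_1,f\in U$ and all $t_i\in T$ (lengths check: $1+3+3+3=10=2k$). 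Condition (i) fails at $u_1$. Both adjacent $T$-blocks have size $2$, so no short swap applies; the segment from either anchor ($e$ or $f$) to $u_1$ has odd length $3$ but joins two $U$-vertices, so no same-length replacement inside the bipartite graph $G'$ exists (parity obstruction); and every $t_i$ has all of its path-neighbors in $U\cup T$, so none is guaranteed a neighbor in $V$ and your "full path rebuild through $t_1$" has no $T$-vertex available to cross the bipartition. Thus neither of your two operations succeeds, and the configuration is consistent with all the structural information you have. (A parity count shows such "all blocks bad or invisible" configurations are unavoidable in general: the total parity defect of the $T$-blocks along an $x$--$y$ path of even length $2k$ must be odd, so bad blocks always exist somewhere, and nothing forces one of them to sit next to the violating vertex with the right orientation.)

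The paper's proof avoids this entirely by never replacing a block-local segment: given the violating vertex $u$ on the cycle $C=xy\ldots u_1^*uu_2^*\ldots x$, it replaces the \emph{whole} arc from $u$ to $x$ if that arc has even length, and otherwise the whole arc from $u$ to $y$ (which then has odd length). Since $x\in U$, $y\in V$ and the two arc lengths sum to $2k$, exactly one of the two choices is parity-compatible with Lemma \ref{lem-3-4}, so a clean replacement path in $G'$ always exists; it removes at least one $T$-vertex and clears the violation at $u$. To repair your proof you would need either this "cut to an endpoint" move, or a combined replacement of the segment spanning both adjacent blocks together with $u_1$ (whose defect is the sum of the two blocks' defects, hence $0$ when both are bad) --- neither of which is what you describe. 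Two smaller points: in your full rebuild the forbidden set handed to Lemma \ref{lem-3-4} must also contain $t_1$, the chosen attachment vertices and the first constructed subpath, which pushes it slightly above $h$, so the lemma needs to be invoked with a little more care; and for $k=2$ your Case 2 would require a $G'$-subpath of length $0$, though that case happens not to arise there.
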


\begin{proof}
For any  $F_{xy}\in \mathcal{F}_{xy}$,  let
\[
\theta_1(F_{xy}) =\left|\left\{u\in V(F_{xy})\setminus \left(T\cup \{x,y\}\right)\colon \deg_{F_{xy}}(u,T)=2\right\}\right|
\]
and
\[
\theta_2(F_{xy}) =\left|\left\{uv\in E(F_{xy}-T- \{x,y\})\colon \deg_{F_{xy}}(u,T)+\deg_{F_{xy}}(v,T)\geq 2\right\}\right|.
\]
We choose $F_{xy}$ from $\mathcal{F}_{xy}$ such that $\theta_1(F_{xy})+\theta_2(F_{xy})$ is minimized, and show that $\theta_1(F_{xy})=\theta_2(F_{xy})=0$ to finish the proof. Suppose first that $\theta_1(F_{xy})\geq 1$. Then there is a $u\in V(F_{xy})\setminus \left(T\cup \{x,y\}\right)$ such that $\deg_{F_{xy}}(u,T)=2$. Let $C= xy\ldots u_1^*uu_2^*\ldots x$ be the cycle in $F_{xy}$ with $u_1^*,u_2^*\in T$. Clearly, $C$ has length $2k+1$. Without loss of generality, we assume that $u\in U$.  If the path $uu_2^*\ldots x$ has even length $l$, then by Lemma \ref{lem-3-4} (ii) with $W=V(F_{xy})$ there is a $ux$-path $P$ of length $l$ in $G[U,V]$ such that $V(P)\cap V(F_{xy})=\{u,x\}$. By replacing $uu_2^*\ldots x$ from $F_{xy}$ with  $P$, we obtain a new copy $F_{xy}'$ of $F$ with $F_{xy}'\in \hf$ and $\theta_1(F_{xy}')<\theta_1(F_{xy})$, contradicting the choice of $F_{xy}$. If the path $uu_2^*\ldots x$ has odd length $l$, then by Lemma \ref{lem-3-4} (i) with $W=V(F_{xy})$ there is a $uy$-path $Q$ of length $2k-l$ in $G[U,V]$ such that $V(Q)\cap V(F_{xy})=\{u,y\}$. By replacing $y\ldots u_1^*u$ from $F_{xy}$ with  $Q$, we obtain a new copy $F_{xy}'$ of $F$ with $F_{xy}'\in \hf$ and $\theta_1(F_{xy}')<\theta_1(F_{xy})$, contradicting the choice of $F_{xy}$.

Suppose next that $\theta_2(F_{xy})\geq 1$. Then there is an edge $uv\in E(F_{xy}-x-y)$ with $u\in U$ and $v\in V$ such that $\deg_{F_{xy}}(u,T)=\deg_{F_{xy}}(v,T)=1$, say $N_{F_{xy}}(u,T)=\{u^*\}$ and $N_{F_{xy}}(v,T)=\{v^*\}$. Let $C$ be the cycle in $F_{xy}$ containing $u,v,u^*,v^*,x,y$.
Assume that $C=xy\ldots u^*uvv^*\ldots x$ or $C=yx\ldots u^*uvv^*\ldots y$. We now distinguish the following two cases.

{\bf Case 1. } $C=xy\ldots u^*uvv^*\ldots x$.

If $y\ldots u^*u$ has odd length $l$, then by Lemma \ref{lem-3-4} (i) with $W=V(F_{xy})$ there is a $yu$-path $P$ of length $l$ in $G[U,V]$ such that $V(P)\cap V(F_{xy})=\{y,u\}$. By replacing $y\ldots u^*u$ from $F_{xy}$ with  $P$, we obtain a new copy $F_{xy}'$ of $F$ with $F_{xy}'\in \hf_{xy}$ and $\theta_2(F_{xy}')<\theta_2(F_{xy})$, contradicting the choice of $F_{xy}$. If $y\ldots u^*u$ has even length $l$, then  the path $vv^*\ldots x$ has odd length $2k-1-l$. By Lemma \ref{lem-3-4} (i) with $W=V(F_{xy})$ there is a $vx$-path $Q$ of length $2k-1-l$ in $G[U,V]$ such that $V(Q)\cap V(F_{xy})=\{v,x\}$. By replacing $vv^*\ldots x$ from $F_{xy}$ with  $Q$, we obtain a new copy $F_{xy}'$ of $F$ with $F_{xy}'\in \hf_{xy}$ and $\theta_2(F_{xy}')<\theta_2(F_{xy})$, contradicting the choice of $F_{xy}$.

{\bf Case 2. } $C=yx\ldots u^*uvv^*\ldots y$.

If $x\ldots u^*u$ has even length $l$, then by Lemma \ref{lem-3-4} (ii) with $W=V(F_{xy})$ there is a $xu$-path $P$ of length $l$ in $G[U,V]$ such that $V(P)\cap V(F_{xy})=\{x,u\}$. By replacing $x\ldots u^*u$ from $F_{xy}$ with  $P$, we obtain a new copy $F_{xy}'$ of $F$ with $F_{xy}'\in \hf_{xy}$ and $\theta_2(F_{xy}')<\theta_2(F_{xy})$, contradicting the choice of $F_{xy}$. If $x\ldots u^*u$ has odd length $l$, then  the path $vv^*\ldots y$ has even length $2k-1-l$. By Lemma \ref{lem-3-4} (ii) with $W=V(F_{xy})$ there is a $vy$-path $Q$ of length $2k-1-l$ in $G[U,V]$ such that $V(Q)\cap V(F_{xy})=\{v,y\}$. By replacing $vv^*\ldots y$ from $F_{xy}$ with  $Q$, we obtain a new copy $F_{xy}'$ of $F$ with $F_{xy}'\in \hf_{xy}$ and $\theta_2(F_{xy}')<\theta_2(F_{xy})$, contradicting the choice of $F_{xy}$.

Thus $\theta_1(F_{xy})=\theta_2(F_{xy})=0$ and the claim follows.
\end{proof}
By Claim 4, both $N_{F_{xy}}(x)\cap T$ and  $N_{F_{xy}}(y)\cap T$ are not empty, and let
$N_{F_{xy}}(x)\cap T =\{x_1^*,x_2^*,\ldots,x_p^*\}$, $N_{F_{xy}}(y)\cap T=\{y_1^*,y_2^*,\ldots,y_q^*\}$.
Then $\{x_1^*,x_2^*,\ldots,x_p^*\}\cap \{y_1^*,y_2^*,\ldots,y_q^*\} =\emptyset$ since $F$ is $K_3$-free.
Let $N_{F_{xy}}(x)\cap V=\{z_1,z_2,\ldots,z_f,z_{f+1},\ldots,z_{s-p}\}$ such that $N_{F_{xy}}(z_\ell)\cap T\neq \emptyset$ for $\ell\leq f$ and $N_{F_{xy}}(z_\ell)\cap T= \emptyset$ for $f+1\leq \ell\leq s-p$. In $F_{xy}$, each $z_\ell$ ($\ell=1,\ldots,f$) has one neighbor being $x$ and the other one $z_\ell^*$ in $T$, and each $z_\ell$ ($\ell=f+1,\ldots,s-p$) has one neighbor being $x$ and the other one $u_\ell$ in $U$. Let $X$ be the set of common neighbors of $x_1^*,x_2^*,\ldots,x_p^*$ in $U_1$ and let $Z_\ell$ be the set of neighbors of $z_\ell^*$ in $V$ for each $\ell=1,\ldots, f$. Similarly, $N_{F_{xy}}(y)\cap U=\{w_1,w_2,\ldots,w_g,w_{g+1},\ldots,w_{s-q}\}$ such that $N_{F_{xy}}(w_\ell)\cap T\neq \emptyset$ for $\ell\leq g$ and $N_{F_{xy}}(w_\ell)\cap T= \emptyset$ for $g+1\leq \ell\leq s-q$. In $F_{xy}$, each $w_\ell$ ($\ell=1,\ldots,g$) has one neighbor being $y$ and the other one $w_\ell^*$ in $T$, and each $w_\ell$ ($\ell=g+1,\ldots,s-q$) has one neighbor being $y$ and the other one $v_\ell$ in $V$. Let $Y$ be the set of common neighbors of $y_1^*,y_2^*,\ldots,y_q^*$ in $V_1$ and let $W_\ell$ be the set of neighbors of $w_\ell^*$ in $U$ for each $\ell=1,\ldots, g$ as shown in Figure \ref{structureofFxy}.

\begin{figure}
  \centering
  \includegraphics[width=0.8\textwidth]{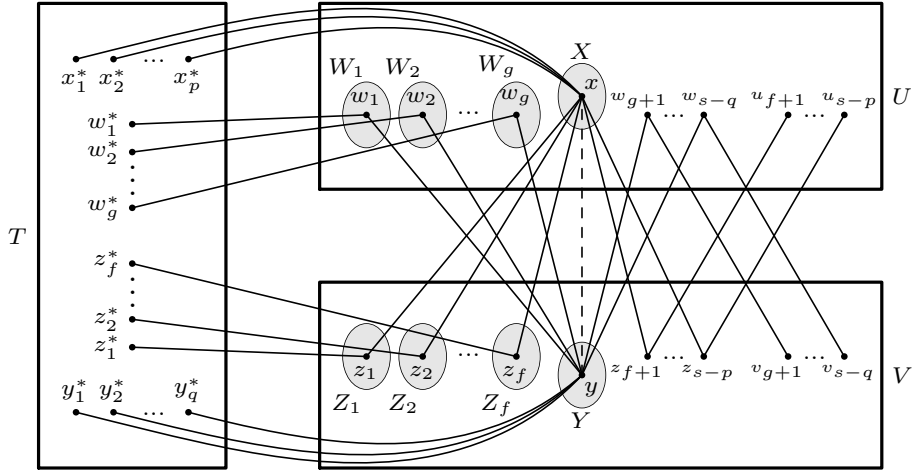}
  \caption{The local structure of $F_{xy}$ with $xy\in \Omega_2$.}\label{structureofFxy}
\end{figure}

For distinct vertices $x'\in X, z_1'\in Z_1,\ldots, z_f'\in Z_f$, if $x'z_1',x'z_2',\ldots,x'z_f'$ are  edges in $G$ then we say that $G[x';z_1',\ldots,z_f']$ is an {\it $(X,Z_1,\ldots,Z_f)$-star} with center $x'$. Let
\[
X_0=\left\{x'\in X\colon \mbox{there exists } \mbox{an } (X,Z_1,\ldots,Z_f)\mbox{-star with center }x'  \right\}.
\]
Clearly, $G[x;z_1,\ldots,z_f]$ is an $(X,Z_1,\ldots,Z_f)$-star, implying that $x\in X_0$.
Similarly, let
\[
Y_0=\left\{y'\in Y\colon \mbox{there exists } \mbox{a } (Y,W_1,\ldots,W_g)\mbox{-star with center }y' \right\}
\]
and clearly $y\in Y_0$.

For any pair $(x',y')$ with $x'\in X_0$ and $y'\in Y_0$, if there exist  $z_1',\ldots,z_f'$ such that $G[x';z_1',\ldots,z_f']$ is an $(X,Z_1,\ldots,Z_f)$-star and  $y'\notin \{z_1',\ldots,z_f'\}$, then we say that $y'$ is {\it good} to $x'$; otherwise $y'$ is {\it bad} to $x'$. Similarly, if there exist $w_1',\ldots,w_g'$ such that $G[y';w_1',\ldots,w_g']$ is a $(Y,W_1,\ldots,W_g)$-star and  $x'\notin \{w_1',\ldots,w_g'\}$, then we say that $x'$ is good to $y'$, otherwise $x'$ is bad to $y'$.  We call $(x',y')$ a {\it compatible} pair if $y'$ is good to $x'$ and $x'$ is good to $y'$; otherwise we say that $(x',y')$ is {\it incompatible}. For each $x'\in X_0$, there exist $z_1',\ldots,z_f'$ such that $G[x';z_1',\ldots,z_f']$ is an $(X,Z_1,\ldots,Z_f)$-star, implying that the number of vertices in $Y_0$ that are bad to $x'$ is at most $f$. Similarly, for each $y'\in Y_0$  the number of vertices in $X_0$ that are bad to $y'$ is at most $g$. Then the number of incompatible pairs between $X_0$ and $Y_0$ is at most $f|X_0|+g|Y_0|$. Thus, the number of compatible pairs between $X_0$ and $Y_0$ is at least $|X_0||Y_0|-f|X_0|-g|Y_0|$.

\begin{claim}
Every compatible pair $(x',y')$ with $x'\in X_0$ and $y'\in Y_0$ is a non-edge of $G[U_1,V_1]$.
\end{claim}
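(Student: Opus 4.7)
The plan is to argue by contradiction: suppose $x' y' \in E(G)$; then I will produce a copy of $F = F_{s,k}$ inside $G$, contradicting its $F$-freeness. The new copy is obtained from $F_{xy}$ by the substitutions $x \mapsto x'$, $y \mapsto y'$, $z_\ell \mapsto z_\ell'$ for $\ell \leq f$, $w_\ell \mapsto w_\ell'$ for $\ell \leq g$, and $z_\ell \mapsto z_\ell''$, $w_\ell \mapsto w_\ell''$ for the remaining indices; all other vertices of $F_{xy}$ are inherited unchanged.

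Since $(x', y')$ is compatible, I fix stars witnessing this: $z_\ell' \in Z_\ell \setminus \{y'\}$ with $x' z_\ell' \in E(G)$ for $\ell = 1, \ldots, f$, and $w_\ell' \in W_\ell \setminus \{x'\}$ with $y' w_\ell' \in E(G)$ for $\ell = 1, \ldots, g$. The exclusions $y' \notin \{z_1', \ldots, z_f'\}$ and $x' \notin \{w_1', \ldots, w_g'\}$ are exactly what prevents $x'$ and $y'$ from being reused internally in the copy. For each $\ell$ with $f+1 \leq \ell \leq s-p$, pick $z_\ell'' \in V$ to be a common neighbor of $x'$ and $u_\ell$ in $G$, chosen greedily to avoid all vertices already used. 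By Lemma~\ref{lem-3-3}(ii), both $x'$ and $u_\ell$ have at least $\left(\tfrac{1}{2} - \tfrac{1}{10h}\right) n$ neighbors in $V$ while $|V| \leq \left(\tfrac{1}{2} + \tfrac{1}{10h}\right) n$, so they share at least $\left(\tfrac{1}{2} - \tfrac{3}{10h}\right) n$ common neighbors in $V$, which vastly exceeds the $O(h)$ vertices to avoid. The $w_\ell''$ for $g+1 \leq \ell \leq s-q$ are picked analogously in $U$.

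It then remains to verify that the substituted structure is indeed a copy of $F_{s,k}$ in $G$. The critical edge $x'y'$ is in $E(G)$ by assumption. The edges $x' x_m^*$ and $y' y_n^*$ lie in $G$ by $x' \in X$ and $y' \in Y$. The edges $x' z_\ell'$ and $z_\ell' z_\ell^*$ come from the star and the definition of $Z_\ell$; the edges $x' z_\ell''$ and $z_\ell'' u_\ell$ come from the common-neighbor construction; the $y'$-side edges are entirely symmetric. Every other edge of $F_{xy}$ is inherited verbatim and hence lies in $G$. Together with the pairwise disjointness and length-$2k$ preservation of the $s$ paths (since only endpoints were substituted), this yields a copy of $F_{s,k}$ in $G$, the required contradiction.

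The main technical point is the bookkeeping: one must verify that all newly selected $z_\ell''$ and $w_\ell''$ are pairwise distinct and avoid the set $V(F_{xy}) \cup \{x', y', z_1', \ldots, z_f', w_1', \ldots, w_g'\}$, which has size at most $h + 2 + f + g \leq 3h$. This is straightforward because the common-neighborhoods in question are of size $\Theta(n)$. The core content of the claim is the clean interplay between compatibility (which frees $x'$ from any $w_\ell'$ and $y'$ from any $z_\ell'$) and the abundance of common neighbors afforded by the high minimum degree of $G[U,V]$ established in Lemma~\ref{lem-3-3}.
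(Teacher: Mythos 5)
There is a genuine gap. Your substitution argument implicitly assumes that the vertices $x',y',z_1',\ldots,z_f',w_1',\ldots,w_g'$ are disjoint from the part of $F_{xy}$ that you keep (the interior vertices of the $s$ paths and the vertices $u_\ell, v_\ell$). Nothing forces this. The vertex $x'$ is an arbitrary element of $X_0$ and may itself lie in $V(F_{xy})$; more seriously, the star witnesses $z_\ell'$ are constrained to lie in $Z_\ell=N(z_\ell^*,V)$ and there may be only one admissible choice, so you cannot pick them to avoid $V(F_{xy})$ the way you pick the $z_\ell''$. Compatibility only rules out $y'\in\{z_1',\ldots,z_f'\}$ and $x'\in\{w_1',\ldots,w_g'\}$; it says nothing about collisions with the retained interior vertices. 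If such a collision occurs, your substituted structure is merely the image of a (non-injective) $F$-homomorphism, not a copy of $F$, and no contradiction follows. The bookkeeping you flag as ``the main technical point'' (choosing $z_\ell''$, $w_\ell''$ fresh) is in fact the easy part, since those come from common neighborhoods of size $\Theta(n)$.

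The paper spends most of its proof on exactly this failure mode: it forms the possibly degenerate image $F^0$, observes that the only possible overlaps are between the new endpoint set $R_1'$ and interior path vertices, and then runs an iterative repair that replaces each overlapped interior vertex (and, if needed, a neighbor of it) by fresh vertices of $U\cup V$, strictly increasing the size of the homomorphic image at each step until an injective copy of $F$ is reached. This repair is where the earlier preparatory claim (choosing $F_{xy}$ so that no kept interior vertex has two $T$-neighbors and no kept interior edge has two endpoints with $T$-neighbors), Lemma~\ref{lem-3-3}(iii), and Lemma~\ref{lem-3-4} are actually used; your proposal never invokes any of them, which is a sign that the essential difficulty has been bypassed rather than resolved. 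To fix your argument you would need to add this collision-repair step (or find another way to guarantee $\{x',y',z_1',\ldots,z_f',w_1',\ldots,w_g'\}$ misses the retained vertices, which does not appear possible for all compatible pairs).
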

\begin{proof}
Suppose not, let $(x',y')$ with $x'\in X_0$, $y' \in Y_0$ be a compatible pair  and $x'y'\in E(G)$. Then there exist $z_1',\ldots,z_f'$ and $w_1',\ldots,w_g'$ such that $G[x',z_1',\ldots,z_f']$ is an $(X,Z_1,\ldots,Z_f)$-star and $G[y';w_1',\ldots,w_g']$ is a $(Y,W_1,\ldots,W_g)$-star. We shall find a copy of $F$ in $G$, which leads to a contradiction.

Let
\[
R_1'=\{x',y',z_1',\ldots,z_f',w_1',\ldots,w_g'\}.
\]
Since $u_{\ell}$ $(f+1\leq \ell\leq s-p)$ and $x'$ have at least $\left(\frac{1}{2}-\frac{1}{10h}\right)n$ common neighbors in $V$ and $n$ is sufficiently large, we may choose distinct $z_{f+1}', \ldots,z_{s-p}'$ from $V\setminus (V(F_{xy})\cup R_1')$ such that $z_{\ell}'\in N(x',V)\cap N(u_\ell,V)$ for each $\ell=f+1,\ldots,s-p$. Similarly, we may choose distinct $w_{g+1}', \ldots,w_{s-q}'$ from $U\setminus (V(F_{xy})\cup R_1')$ such that $w_{\ell}'\in N(y',U)\cap N(v_\ell,U)$ for each $\ell=g+1,\ldots,s-q$. Let
\[
R_1=\{x,y,z_1,\ldots,z_f,w_1,\ldots,w_g\},\ R_2=\{z_{f+1},\ldots,z_{s-p},w_{g+1},\ldots,w_{s-q}\}
\]
 and
\[
 R_2'=\{z_{f+1}',\ldots,z_{s-p}',w_{g+1}',\ldots,w_{s-q}'\}.
\]
Clearly, $R_2'\cap (R_1'\cup V(F_{xy}))=\emptyset$. Let $F^0$ be a graph obtained from $F_{xy}$ by replacing vertices in $R_1\cup R_2$ with vertices in $R_1'\cup R_2'$. If $R_1'\cap (V(F_{xy})\setminus (R_1\cup R_2))=\emptyset$, then $F^0$ is a copy of $F$ in $G$, a contradiction.
Hence  $R_1'\cap (V(F_{xy})\setminus (R_1\cup R_2))\neq \emptyset$, that is, $F^0$ is the image of an $F$-homomorphism but not
a copy of $F$.

Now we replace the overlapped vertices in $F^0$ to get a copy of $F$ by a greedy algorithm. Let $\phi_0$ be the homomorphism from $F$ to $F_0$ and let $\phi_{xy}$ be the isomorphism from $F$ to $F_{xy}$. Then $\phi_0$ is not an isomorphism and $\phi_{xy}$ is an isomorphism. Let $R_1^*=\phi_{xy}^{-1}(R_1)$ and  $R_2^*=\phi_{xy}^{-1}(R_2)$. By the labeling of $F$, we see that
\begin{align*}
R_1^*\cup R_2^*= \left\{a,b\right\}\cup  \left\{c_1^i  \colon \phi_{xy}(c_1^i)\in U\cup V\right\}\cup \left\{c_{2k-1}^i  \colon \phi_{xy}(c_{2k-1}^i)\in U\cup V\right\}.
\end{align*}
Let
\begin{align}\label{eq-R3star}
 R_3^*= \left\{u\in V(F) \colon \phi_{xy}(u)\in T\right\} \mbox{ and } R_4^*=\left(\cup_{i=1}^s\left\{c_2^i,\ldots,c_{2k-2}^i\right\}\right)\setminus R_3^*.
\end{align}
Clearly, $(R_1^*,R_2^*,R_3^*,R_4^*)$ is a partition of $V(F)$ and $\phi_{xy}(R_4^*)\subset U\cup V$. Since for any edge $uv$ of $F$ with $u\in R_1^*$ we have $\phi_{xy}(v)\in T\cup R_1\cup R_2$, it follows that $v\in R_1^*\cup R_2^*\cup R_3^*$. Thus there is no edge between $R_1^*$ and $R_4^*$ in $F$.
Note that $\phi_0$ can be expressed explicitly as follows:
\begin{itemize}
    \item[(i)] $\phi_0(\phi_{xy}^{-1}(x))=x'$ and $\phi_0(\phi_{xy}^{-1}(y))=y'$;
    \item[(ii)] for each $\ell =1,\ldots,s-p$, $\phi_0(\phi_{xy}^{-1}(z_\ell))=z_{\ell}'$  and  for  each $\ell =1,\ldots,s-q$, $\phi_0(\phi_{xy}^{-1}(w_\ell))=w_{\ell}'$;
    \item[(iii)] $\phi_0(c)=\phi_{xy}(c)$ for  $c\in R_3^*\cup R_4^*$.
\end{itemize}
Since vertices in $R_2'$ are chosen disjoint from $V(F_{xy})\cup R_1'$, we have $\phi_0(R_2^*)\cap \phi_0(R_1^*\cup R_3^*\cup R_4^*)=\emptyset$. Then $\phi_0(R_1^*)\cap \phi_0(R_4^*)\neq \emptyset$ because $\phi_0$ is not an isomorphism and $\phi_{xy}$ is an isomorphism.

For any $a\in R_1^*$ and $c\in R_4^*$ with $\phi_0(a)=\phi_0(c)$, $\phi_0(c)=\phi_0(a)\in \phi_0(R_1^*)=R_1'\subset U\cup V$. 
By \eqref{eq-R3star} we have $c\in \cup_{i=1}^s\{c_2^i,\ldots,c_{2k-2}^i\}$. Let $d_1,d_2$ be two neighbors of $c$ in $F$. Clearly, $d_i$ has degree two in $F$ for $i=1,2$. Since $c\in R_4^*$ and there is no edge between $R_1^*$ and $R_4^*$ in $F$, it follows that $d_1,d_2\notin R_1^*$.
Since $\phi_0(c)=\phi_{xy}(c)\in V(F_{xy})\setminus (T\cap \{x,y\})$, at most one of $\phi_{xy}(d_1),\phi_{xy}(d_2)$ is in $T$ by Claim 5 (i). Recall that $F^0$ is obtained from $F_{xy}$ by replacing vertices in $R_1\cup R_2$ with vertices in $R_1'\cup R_2'$ and  never changing vertices in $T$.  Thus $|\{\phi_0(d_1),\phi_0(d_2)\}\cap T|= |\{\phi_{xy}(d_1),\phi_{xy}(d_2)\}\cap T|\leq 1$. We shall find an $F$-homomorphism $\phi_1$ such that $|\phi_1(V(F))|>|\phi_0(V(F))|$ by distinguishing two cases.

{\bf Case 1.} $|\{\phi_0(d_1),\phi_0(d_2)\}\cap T|=0$.

Without loss of generality, we assume that $\phi_0(c)\in U$ and $\phi_0(d_1),\phi_0(d_2)\in V$. Since  $\phi_0(d_1),\phi_0(d_2)$ have at least $\left(\frac{1}{2}-\frac{1}{10h}\right)n$ common neighbors in $U$, we may choose $u'$ from $N(\phi_0(d_1))\cap N(\phi_0(d_2))\setminus \phi_0(V(F))$. Define $\phi_1(c)=u'$ and $\phi_1(a)=\phi_0(a)$ for all $a\in V(F)\setminus \{c\}$. It is easy to see that $\phi_1$ is  an $F$-homomorphism with $|\phi_1(V(F))|=|\phi_0(V(F))|+1$.

{\bf Case 2.} $|\{\phi_0(d_1),\phi_0(d_2)\}\cap T|=1$.

Without loss of generality, we assume that $\phi_0(c)\in U$, $\phi_0(d_1)\in V$ and $\phi_0(d_2)\in T$.
Recall that $d_1$ has exactly two neighbors in $F$ and one of them is $c$, and let $d_3$ be the other one. Since $\phi_{xy}(cd_1)$ is an edge of $F_{xy}-T-\{x,y\}$, by Claim 5 (ii) $\deg_{F_{xy}}(\phi_{xy}(c),T)+\deg_{F_{xy}}(\phi_{xy}(d_1),T)\leq 1$, that is,
$|\phi_{xy} (\{d_1,d_2\})\cap T|+|\phi_{xy} (\{c,d_3\})\cap T|\leq 1$. Because $F^0$ is obtained from $F_{xy}$ by replacing vertices in $R_1\cup R_2$ with vertices in $R_1'\cup R_2'$ and  never changing vertices in $T$, we have $|\phi_{0} (\{d_1,d_2\})\cap T|+|\phi_{0} (\{c,d_3\})\cap T|=|\phi_{xy} (\{d_1,d_2\})\cap T|+|\phi_{xy} (\{c,d_3\})\cap T|\leq 1$. Then $|\phi_{0} (\{c,d_3\})\cap T|=0$ by $|\phi_{0} (\{d_1,d_2\})\cap T|=1$, implying that $\phi_0(d_3)\in U$.
Since $\phi_0(d_2)$ has one neighbor  $\phi_0(c)$ in $U$, by Lemma \ref{lem-3-3} (iii) we know that $\phi_0(d_2)$ has at least $h+1$ neighbors in $U$, and let $u'\in N(\phi_0(d_2),U)\setminus \phi_0(V(F))$. Moreover, since $u'$ and $\phi_0(d_3)$ have at least $\left(\frac{1}{2}-\frac{1}{10h}\right)n>h$ common neighbors in $V$,  we may choose $v'\in N(u',V)\cap N(\phi_0(d_3),V)\setminus \phi_0(V(F))$.  Define $\phi_1(c)=u'$, $\phi_1(d_1)=v'$ and $\phi_1(a)=\phi_0(a)$ for all $a\in V(F)\setminus \{c,d_1\}$. It is easy to see that $\phi_1$ is  an $F$-homomorphism with $|\phi_1(V(F))|\geq |\phi_0(V(F))|+1$.

If $\phi_1$ is not an $F$-isomorphism, then there exist $a'\in R_1^*$ and $c'\in R_4^*$ with $\phi_1(a')=\phi_1(c')$. By the same argument above, we shall find an $F$-homomorphism $\phi_2$ such that $|\phi_2(V(F))|>|\phi_1(V(F))|$.  Do this repeatedly, we get   $F$-homomorphisms $\phi_1,\phi_2,\ldots, \phi_l,\ldots$ with $h-|R_1'|\leq |\phi_0(V(F))|< |\phi_1(V(F))|<\cdots<|\phi_l(V(F))|<\cdots$.
Since $|\phi_i(V(F))|\leq h$ for all $i$, we shall obtain an $F$-isomorphism in at most $|R_1'|$ steps, contradicting the fact that $G$ is $F$-free. Thus, every compatible pair $(x',y')$ with $x'\in X_0$ and $y'\in Y_0$ is not an edge in $G[U_1,V_1]$.
\end{proof}

Recall that the number of compatible pairs between $X_0$ and $Y_0$ is at least $|X_0||Y_0|-f|X_0|-g|Y_0|$. Since $f,g\leq h$, it follows that
\[
e_{\bar{G}}(X_0,Y_0) \geq |X_0||Y_0|-f|X_0|-g|Y_0| \geq |X_0||Y_0|-h(|X_0|+|Y_0|).
\]
Let $S$ be one of $X_0$ and $Y_0$ with smaller size.  By the same argument as in the proof of Step 1, we have
\[
e_{\bar{G}}(X_0,Y_0) \geq \frac{|S|^2}{16h^2}.
\]
We delete vertices in $S$ from $U_1\cup V_1$ and let $U_1'=U_1\setminus S$ and $V_1'=V_1\setminus S$. If $\bar{E}[U_1',V_1'] \cap \Omega_2=\emptyset$, then we are done. Otherwise, there is another non-edge $xy$ in $\Omega_2$ with $x\in U_1'$, $y\in V_1'$, and we delete another $S'$ from $U_1'\cup V_1'$  incidents with at least $\frac{|S'|^2}{16h^2}$ non-edges between $U_1'$ and $V_1'$. By deleting vertices greedily, we shall obtain a sequence of disjoint sets $S_1,S_2,\ldots,S_l$ in $U_1\cup V_1$ such that $\bar{E}[U_1\setminus (S_1\cup \ldots \cup S_l),V_1\setminus (S_1\cup \ldots \cup S_l)] \cap \Omega_2=\emptyset$.

In each step of the greedy algorithm, there are vertices $x_1^*,\ldots,x_p^*, z_1^*,\ldots z_f^*\in T$ and $y_1^*,\ldots,y_q^*, w_1^*,\ldots w_g^*\in T$ such that either $X_0$ or $Y_0$ is deleted. If $X_0$ is deleted, then since $X$ is the set of common neighbors of $x_1^*,\ldots,x_p^*$ in the  $U_1\setminus X_0$, there are no $(X,Z_1,\ldots,Z_f)$-stars in the future steps. It follows that the tuple $(x_1^*,\ldots,x_p^*, z_1^*,\ldots z_f^*)$ will not appear in the future steps of the algorithm. Similarly, if $Y_0$ is deleted, then the tuple $(y_1^*,\ldots,y_q^*, w_1^*,\ldots w_g^*)$ will not appear in the future steps of the algorithm. Since $p+f\leq s$ and $q+g\leq s$, it follows that
\begin{align*}
l&\leq \sum_{p+f\leq s} \binom{|T|}{p} \binom{|T|-p}{f}  +\sum_{q+g\leq s} \binom{|T|}{q} \binom{|T|-q}{g}\\
&= 2\sum_{p+f\leq s}\binom{|T|}{p} \binom{|T|-p}{f}\\
&\leq 2 \sum_{p+f\leq s} |T|^s\leq 2 s^2 |T|^s.
\end{align*}

Similarly, by \eqref{ineq3-3} and \eqref{ineq3-4} we arrive at
\[
\left(\sum_{i=1}^{l} |S_i|\right)^2  \leq 16h^2 \cdot 25h^2\varepsilon n^2 l\leq  20^2 h^4\varepsilon n^2 \cdot 2s^2|T|^s \leq  20^2 h^4\varepsilon n^2 \cdot 2s^2 (30h^2\varepsilon n)^s.
\]
Let $U_2=U_1\setminus (S_1\cup \ldots \cup S_l)$ and $V_2=V_1\setminus (S_1\cup \ldots \cup S_l)$. Then
\[
|U_1\setminus U_2|+|V_1\setminus V_2| \leq \sum_{i=1}^{l} |S_i| \leq 20\sqrt{2}\cdot 30^{\frac{s}{2}}h^{s+2}s\varepsilon^{\frac{s+1}{2}} n^{\frac{s+2}{2}}\leq (6h)^{s+2}\varepsilon^{\frac{s+1}{2}} n^{\frac{s+2}{2}}
\]
and Step 2 is finished.
\end{proof}

By Step 2, we see that $\bar{E}[U_2,V_2] \cap (\Omega_1\cup \Omega_2)=\emptyset$. Thus, we are left to delete vertices from $U_2\cup V_2$ to destroy all non-edges in $\Omega_3\cap \bar{E}[U_2,V_2]$. If $\Omega_3\cap \bar{E}[U_2,V_2]=\emptyset$, we have nothing to do. Hence we assume that $\Omega_3\cap \bar{E}[U_2,V_2]\neq\emptyset$ and  let $xy$ be a non-edge in $\Omega_3$ with $x\in U_2$ and $y\in V_2$. By definition of $\Omega_3$, there is at least one copy $F_{xy}$ of $F$ such that $\deg_{F_{xy}}(x)=s+1$ and $\deg_{F_{xy}}(y)=2$ or $\deg_{F_{xy}}(x)=2$ and $\deg_{F_{xy}}(y)=s+1$. We  partition $\Omega_3$ into four classes as follows:
\begin{equation*}
\left\{
\begin{array}{l}\Omega_{31}=\left\{xy\in \Omega_3\colon \deg_{F_{xy}}(x)=s+1,\ \deg_{F_{xy}}(y)=2,\ \deg_{F_{xy}}(x,T)=s \right\},\\[6pt]
\Omega_{32}=\left\{xy\in \Omega_3\colon \deg_{F_{xy}}(x)=2,\ \deg_{F_{xy}}(y)=s+1,\ \deg_{F_{xy}}(y,T)=s \right\},\\[6pt]
\Omega_{33}=\left\{xy\in \Omega_3\colon  \deg_{F_{xy}}(x)=s+1,\ \deg_{F_{xy}}(y)=2,\ \deg_{F_{xy}}(x,T)\leq s-1\right\},\\[6pt]
\Omega_{34}=\left\{xy\in \Omega_3\colon \deg_{F_{xy}}(x)=2,\ \deg_{F_{xy}}(y)=s+1,\ \deg_{F_{xy}}(y,T)\leq s-1 \right\}.
\end{array}\right.
\end{equation*}

We complete the proof by the following two steps.

{\noindent \bf Step 3.1.}
We can find $U_3\subset U_2$ and $V_3\subset V_2$ such that $|U_2\setminus U_3|+|V_2\setminus V_3|  \leq (6h)^{s+3}\varepsilon^{\frac{s+1}{2}} n^{\frac{s+2}{2}}$ and $\bar{E}[U_3,V_3]\cap(\Omega_{31}\cup \Omega_{32}) =\emptyset$.  That is,  by deleting at most $(6h)^{s+3}\varepsilon^{\frac{s+1}{2}} n^{\frac{s+2}{2}}$ vertices from $U_2\cup V_2$ we destroy all non-edges in $\Omega_{31}\cup \Omega_{32}$.

\begin{proof}
If $\bar{E}[U_2,V_2]\cap(\Omega_{31}\cup \Omega_{32}) = \emptyset$, we have nothing to do. So assume that $\bar{E}[U_2,V_2]\cap(\Omega_{31}\cup \Omega_{32}) \neq \emptyset$, then there is a non-edge $xy$ in $\Omega_{31}\cup \Omega_{32}$ with $x\in U_2$ and $y\in V_2$. Without loss of generality, we assume that $xy\in \Omega_{31}$. By Claim 4, $\deg_{F_{xy}}(y,T)=1$ and let $ N_{F_{xy}}(y,T)=\{y^*\}$. Assume that
\[
N_{F_{xy}}(x)\cap T =\{x_1^*,x_2^*,\ldots,x_s^*\}.
\]
Let $X$ be the set of common neighbors of $x_1^*,\ldots,x_s^*$ in $U_2$ of $G$ and let $Y=N_G(y^*,V_2)$. For any edge $x'y'$ in $G[X,Y]$, if $\{x',y'\}\cap V(F_{xy})=\emptyset$, then by replacing $x,y$ with $x',y'$ in $F_{xy}$ we obtain a copy of $F$ in $G$, a contradiction. Thus,
every edge in $G[X,Y]$ intersects $V(F_{xy})$, implying that $e(X,Y)\leq h(|X|+|Y|)$. Then
\[
e_{\bar{G}}(X,Y) = |X||Y| - e(X,Y) \geq |X||Y|-h(|X|+|Y|).
\]
Let $S$ be one of $X$ and $Y$ with the smaller size. By the same argument as in Step 1, we have
\[
e_{\bar{G}}(X,Y) \geq \frac{|S|^2}{16h^2}.
\]
We delete vertices in $S$ from $U_2\cup V_2$ and let $U_2'=U_2\setminus S$ and $V_2'=V_2\setminus S$.
If $\bar{E}[U_2',V_2'] \cap(\Omega_{31}\cup \Omega_{32}) =\emptyset$, then we are done. Otherwise, there is another non-edge $xy$ in $\Omega_{31}\cup \Omega_{32}$ with $x\in U_2'$, $y\in V_2'$, and we delete another $S'$ from $U_2'\cup V_2'$ incidents with at least $\frac{|S'|^2}{16h^2}$ non-edges between $U_2'$ and $V_2'$. By deleting vertices greedily, we shall obtain a sequence of disjoint sets $S_1,S_2,\ldots,S_l$ in $U_2\cup V_2$ such that $\bar{E}[U_2\setminus (S_1\cup \ldots \cup S_l),V_2\setminus (S_1\cup \ldots \cup S_l)] \cap (\Omega_{31}\cup \Omega_{32})=\emptyset$.

In each step of the greedy algorithm, if there is a non-edge $xy\in \Omega_{31}$ between $U_2$ and $V_2$, then there exist vertices $x_1^*,\ldots,x_s^*, y^*\in T$ such that either $X=\cap_{i=1}^s N(x_i^*,U_2)$ or $Y=N(y^*,V_2)$ is deleted. If there is  a non-edge $xy\in \Omega_{32}$ between $U_2$ and $V_2$, then there exist vertices $y_1^*,\ldots,y_s^*, x^*\in T$ such that either $X=N(x^*,U_2)$ or $Y=\cap_{i=1}^s N(y_i^*,V_2)$ is deleted. It follows that
\[
l\leq 2\left(\binom{|T|}{s}+|T|\right)< 2(|T|^s+|T|)\leq 4|T|^s\leq 4 (30h^2\varepsilon n)^s.
\]
By \eqref{ineq3-3} and \eqref{ineq3-4}, we arrive at
\[
\left(\sum_{i=1}^{l} |S_i|\right)^2  \leq 16h^2 \cdot 25h^2\varepsilon n^2 l \leq  20^2 h^4\varepsilon n^2 \cdot 4 (30h^2\varepsilon n)^s=40^2\cdot30^sh^{2s+4}\varepsilon^{s+1}n^{s+2}.
\]
Let $U_3=U_2\setminus (S_1\cup \ldots \cup S_l)$ and $V_3=V_2\setminus (S_1\cup \ldots \cup S_l)$. Then
\[
|U_2\setminus U_3|+|V_2\setminus V_3| \leq \sum_{i=1}^{l} |S_i| \leq 40\cdot 30^{\frac{s}{2}}h^{s+2}\varepsilon^{\frac{s+1}{2}} n^{\frac{s+2}{2}}\leq (6h)^{s+3}\varepsilon^{\frac{s+1}{2}} n^{\frac{s+2}{2}}
\]
and Step 3.1 is finished.
\end{proof}

{\noindent \bf Step 3.2.}
We can find $U_4\subset U_3$ and $V_4\subset V_3$ such that $|U_3\setminus U_4|+|V_3\setminus V_4|  \leq (6h)^{s+3}\varepsilon^{\frac{s+1}{2}} n^{\frac{s+2}{2}}$ and $G[U_4,V_4]$ is complete bipartite, i.e.,  by deleting at most $(6h)^{s+3}\varepsilon^{\frac{s+1}{2}} n^{\frac{s+2}{2}}$ vertices from $U_3\cup V_3$ we obtain  an induced complete bipartite subgraph of $G$.

\begin{proof}
If $\bar{E}[U_3,V_3]\cap(\Omega_{33}\cup \Omega_{34}) = \emptyset$, we have nothing to do. So assume that $\bar{E}[U_3,V_3]\cap(\Omega_{33}\cup \Omega_{34})\neq \emptyset$, then there is a non-edge $xy$ in $\Omega_{33}\cup \Omega_{34}$ with $x\in U_3$ and $y\in V_3$. Without loss of generality, we assume that $\deg_{F_{xy}}(x)=s+1$ and $\deg_{F_{xy}}(y)=2$. By Claim 4, $\deg_{F_{xy}}(y,T)=1$ and let $N_{F_{xy}}(y,T)=\{y^*\}$. Let $N_{F_{xy}}(x)\cap T =\{x_1^*,x_2^*,\ldots,x_p^*\}$ and $N_{F_{xy}}(x)\cap V =\{z_{p+1},\ldots,z_s, y\}$ with $p\leq s-1$. By Claim 4, we have $p\geq 1$. Let $X$ be the set of common neighbors of $x_1^*,\ldots,x_p^*$ in $U_3$ of $G$ and let $Y=N_G(y^*,V_3)$.

Let $\phi_{xy}$ be the isomorphism from $F$ to $F_{xy}$. Without loss of generality, assume that $\phi_{xy}(a)=x$ and $\phi_{xy}(c_1^1)=y$.  If $\psi$ is an injective homomorphism from $F-c_1^1$ to $G$ with
\[
\psi(\phi_{xy}^{-1}(x_1^*))=x_1^*,\ \ldots,\ \psi(\phi_{xy}^{-1}(x_p^*))=x_p^*,\ \psi(\phi_{xy}^{-1}(y^*))=y^*
\]
and
\[
\psi(\phi_{xy}^{-1}(z_{p+1}))\in V,\ \ldots,\ \psi(\phi_{xy}^{-1}(z_s))\in V,\  \psi(a) \in X,
\]
then we say that $\psi$ is {\it agree with} $\phi_{xy}$. Define
\[
\Psi_{xy} =\{\psi\colon \psi \mbox{ is an injective homomorphism from $F-c_1^1$ to $G$ that is agree with }  \phi_{xy} \}.
\]
Let
\[
X_0=\{x'\in X\colon \exists \psi_{x'}\in \Psi_{xy} \mbox{ such that } \psi_{x'}(a)=x'\}.
\]
For any $x'\in X_0$, let $H_{x'}$ be a copy of $F-c_1^1$ in $G$ corresponding to $\psi_{x'}$. If there is $y'\in Y\setminus V(H_{x'})$ such that $x'y'\in E(G)$, then we define a homomorphism $\phi$ from $F$ to $G$ as follows:
\[
\phi(u)=\psi_{x'}(u) \mbox{ for all }u \in V(F-c_1^1)\mbox{ and }\phi(c_1^1)= y'.
\]
Then $\phi$ is an injective homomorphism from $F$ to $G$, contradicting the fact that $G$ is $F$-free. Hence each $x'\in X_0$ has at most $V(H_{x'})$ neighbors in $Y$, implying that
\[
e_{\bar{G}}(X_0,Y)\geq |X_0||Y|-|X_0|h\geq |X_0||Y|-h(|X_0|+|Y|).
\]
Let $S$ be one of $X_0$ and $Y$ with the smaller size. By the same argument as in Step 1, we have
\[
e_{\bar{G}}(X_0,Y) \geq \frac{|S|^2}{16h^2}.
\]

We delete vertices in $S$ from $U_3\cup V_3$ and let $U_3'=U_3\setminus S$ and $V_3'=V_3\setminus S$.
If $\bar{E}[U_3',V_3'] \cap (\Omega_{33}\cup \Omega_{34})=\emptyset$, then we are done. Otherwise, there is another non-edge $xy$ in $(\Omega_{33}\cup \Omega_{34})$ with $x\in U_3'$, $y\in V_3'$, and we delete another $S'$ from $U_3'\cup V_3'$ incident with at least $\frac{|S'|^2}{16h^2}$ non-edges between $U_3'$ and $V_3'$. By deleting vertices greedily, we shall obtain a sequence of disjoint  sets $S_1,S_2,\ldots,S_l$ in $U_3\cup V_3$ such that $\bar{E}[U_3\setminus (S_1\cup \ldots \cup S_l),V_3\setminus (S_1\cup \ldots \cup S_l)] \cap (\Omega_{33}\cup \Omega_{34})=\emptyset$.

In each step of the greedy algorithm, if there is a non-edges $xy\in \Omega_{33}$ between $U_3$ and $V_3$, then there exist vertices $x_1^*,\ldots,x_p^*, y^*\in T$ such that either $X_0$ or $Y$ is deleted. If $Y$ is deleted, then $y^*$ has no neighbor in  $V_3\setminus Y$. If $X_0$ is deleted, then there is no non-edge $x'y'\in \Omega_{33}$ between $U_3\setminus X_0$ and $V_3$ such that $N_{F_{x'y'}}(x')\cap T =\{x_1^*,x_2^*,\ldots,x_p^*\}$ and $N_{F_{x'y'}}(y')\cap T =\{y^*\}$. For otherwise, $F_{x'y'}-y'$ is a copy of $F-\phi_{x'y'}^{-1}(y')$, which is also a copy of $F-c_1^1$,  contradicting the assumption that $x'\notin U_3\setminus X_0$. It follows that the tuple $(x_1^*,\ldots,x_p^*, y^*)$ will not appear in the future steps of the algorithm. If there is a non-edges $xy\in \Omega_{34}$ between $U_3$ and $V_3$, then there exist vertices $y_1^*,\ldots,y_q^*, x^*\in T$ such that either $X=N(x^*,U_2)$ or $Y_0$ (which can be defined similarly) is deleted. By the same argument, we see that the tuple $(y_1^*,\ldots,y_q^*, x^*)$ will not appear in the future steps of the algorithm. Therefore,
\[
l\leq \sum_{p=1}^{s-1} \binom{|T|}{p}|T| +\sum_{q=1}^{s-1} \binom{|T|}{q}|T|< \sum_{p=1}^{s-1} |T|^{p+1} +\sum_{q=1}^{s-1} |T|^{q+1}< 2s|T|^s\leq 2s (30h^2\varepsilon n)^s.
\]
By \eqref{ineq3-3} and \eqref{ineq3-4}, we arrive at
\[
\left(\sum_{i=1}^{l} |S_i|\right)^2  \leq 16h^2 \cdot 25h^2\varepsilon n^2 l \leq  20^2 h^4\varepsilon n^2 \cdot 2s (30h^2\varepsilon n)^s = 2\cdot 20^2 \cdot 30^s sh^{2s+4}\varepsilon^{s+1}n^{s+2}.
\]
Let $U_4=U_3\setminus (S_1\cup \ldots \cup S_l)$ and $V_4=V_3\setminus (S_1\cup \ldots \cup S_l)$. Since $\bar{E}[U_4,V_4] \cap (\Omega_1\cup\Omega_2\cup\Omega_3)=\emptyset$, $G[U_4,V_4]$ is complete bipartite. Moreover,
\[
|U_3\setminus U_4|+|V_3\setminus V_4| \leq \sum_{i=1}^{l} |S_i| \leq 20\sqrt{2}\cdot 30^{\frac{s}{2}}\sqrt{s}h^{s+2}\varepsilon^{\frac{s+1}{2}} n^{\frac{s+2}{2}}\leq (6h)^{s+3}\varepsilon^{\frac{s+1}{2}} n^{\frac{s+2}{2}}.
\]
Thus Step 3.2 is finished.
\end{proof}

Let $n'$ be the total number of vertices we deleted from $G$ to obtain an induced complete bipartite graph.  By Lemma \ref{lem-3-3} (i) and Steps 1, 2, 3.1, 3.2, we have
\begin{align*}
n'=|T|+ |(U\cup V)\setminus (U_4\cup V_4)|
&\leq 30h^2\varepsilon n+160 h^3\varepsilon n^{\frac{3}{2}} +3\cdot(6h)^{s+3}\varepsilon^{\frac{s+1}{2}} n^{\frac{s+2}{2}}.
\end{align*}
Let $ \varepsilon= \alpha n^{-\frac{s}{s+1}}$. Then for $s\geq 2$, $\alpha<1$ and $h\leq 2sk$, we have
\begin{align*}
n'&= 30h^2\alpha n^{\frac{1}{s+1}}+160 h^3\alpha n^{\frac{3}{2}-\frac{s}{s+1}}+3\cdot(6h)^{s+3}\alpha^{\frac{s+1}{2}} n\\[5pt]
&\leq \left(30h^2+160h^3+3\cdot(6h)^{s+3}\alpha^{\frac{s-1}{2}}\right)\alpha n\\[5pt]
&\leq 4\cdot (6h)^{s+3}\alpha n\\[5pt]
&\leq 4\cdot (12sk)^{s+3}\alpha n.
\end{align*}
This completes the proof.
\end{proof}

\end{document}